\documentclass[reqno]{amsart}
%%%%%%%%%%%%%%%%%%%%%%%%%%
\usepackage{amsthm,amsfonts,amssymb,mathrsfs}
\usepackage[usenames]{color}
\setlength{\oddsidemargin}{50pt}
\setlength{\evensidemargin}{50pt}
\setlength{\parindent}{0pt}
\setlength{\parskip}{1ex}
\frenchspacing
\newtheorem{theorem}{Theorem}[section]
\newtheorem{lemma}[theorem]{Lemma}
\newtheorem{assumption}[theorem]{Assumption}

\theoremstyle{remark}
\newtheorem{remark}[theorem]{Remark}
\newtheorem{definition}[theorem]{Definition}

\numberwithin{equation}{section}

\renewcommand{\bar}[1]{\overline{#1}}
\renewcommand{\tilde}[1]{\widetilde{#1}}

%\renewcommand*{\chaptermark}[1]{\markboth{\chaptermarkformat #1}{}}
%\renewcommand*{\sectionmark}[1]{\markright{\sectionmarkformat #1}}
%Pakete mit Attributen
%Erstellen eines Index
%\makeindex

%kalligraphisch

\newcommand{\MM}{\mathcal{M}}

\newcommand{\RR}{\mathcal{R}}

%Mengensystem

\newcommand{\C}{\mathbb{C}}
\newcommand{\R}{\mathbb{R}}
\newcommand{\N}{\mathbb{N}}

%griechisch
%griechisch

\newcommand{\al}{\alpha}
\newcommand{\be}{\beta}
\newcommand{\ga}{\gamma}
\newcommand{\de}{\delta}

\renewcommand{\th}{\theta}

\newcommand{\la}{\lambda}

\newcommand{\Ga}{\Gamma}
\newcommand{\De}{\Delta}

\newcommand{\Si}{\Sigma}

\newcommand{\Om}{\Omega}

%mathematische Symbole
%%Pfeile

\renewcommand{\Im}{\mathop{\textrm{\upshape{Im}}}}

\newcommand{\diag}{\text{diag}}

\newcommand{\ldr}{\left|\!\left|\!\left|}
\newcommand{\norm}{|\!|\!|}
\newcommand{\rdr}{\right|\!\right|\!\right|}

\newcommand{\neu}[1]{\color{blue}{#1}\color{black}{}}
\renewcommand{\texttt}[1]{\neu{#1}}
\newcommand{\weg}[1]{\color{red}{#1}\color{black}{}}
\renewcommand{\textsf}[1]{\weg{#1}}
\newcommand{\id}{\mathop{\textrm{id}}\nolimits}

%---- Aenderungen akzeptieren:
\renewcommand{\neu}[1]{#1}
\renewcommand{\weg}[1]{}
%%Mengenoperationen
\newcommand{\sm}{\setminus}

%%%%%%%%%%%%%%%%%%%%%%%%%%%%%%%%%%

%%%%%%%%%%%%%%%%%%%%%%%%%%%%%%%%%%
\allowdisplaybreaks
%%%%%%%%%%%%%%%%%%%%%%%%%%%%%%%%%%%%%%%%%%%%%%%%%%%%%%%%%%%%%%%%%%%%%%%%%%%%%%%%%%%%%%%%%%%%%%%%%%%%%%%%%%%%%%%%%%%%%%%%%%%%%%%%%%
%%%%%%%%%%%%%%%%%%%%%%%%%%%%%%%%%%%%%%%%%%%%%%%%%%%%%%%%%%%%%%%%%%%%%%%%%%%%%%%%%%%%%%%%%%%%%%%%%%%%%%%%%%%%%%%%%%%%%%%%%%%%%%%%%%
%Begin des Dokuments
\begin{document}
%%%%%%%%%%%%%%%%%%%%%%%%%%%%%%%%%%%%%%%%%%%%%%%%%%%%%%%%%%%%%%%%%%%%%%%%%%%%%%%%%%%%%%%%%%%%%%%%%%%%%%%%%%%%%%%%%%%%%%%%%%%%%%%%%
\title[Estimates for a  transmission problem of mixed type]{$L^p$-estimates for a  transmission problem of mixed elliptic-parabolic type}
\author{Robert Denk}
\author{Tim Seger}
\address{University of Konstanz, Department of Mathematics and Statistics, 78457 Konstanz,
Germany}
\email{robert.denk@uni-konstanz.de, tim.seger@uni-konstanz.de}
\keywords{Transmission problem, elliptic-parabolic equation, a priori estimates}
 \subjclass[2010]{35B45, 35M12}
 
\begin{abstract} We consider the situation when an elliptic problem in a subdomain $\Omega_1$ of an $n$-dimensional bounded domain $\Omega$ is coupled via inhomogeneous canonical transmission conditions to a parabolic  problem in $\Omega\setminus\Omega_1$. In particular, we can treat elliptic-parabolic equations in bounded domains with discontinuous coefficients.  Using Fourier multiplier techniques, we prove an a priori estimate for strong solutions to the equations in $L^p$-Sobolev spaces.\end{abstract}
\date{July 25, 2013}
\maketitle
\rmfamily

%\bigskip
%\fbox{\color{blue}Blau = ge\"andert, \color{red}rot = wird gestrichen.\color{black}}

%%%%%%%%%%%%%%%%%%%%%%%%%%%%%%%%%%%%%%%%%%%%%%%%%%%%%%%%%%%%%%%%%%%%%%%%%%%%%%%%%%%%%%%%%%%%%%%%%%%%%%%%%%%%%%%%%%%%%%%%%%%%%%%%%%%%%%
\section{Introduction}
In the present paper we  prove a priori estimates in $L^p$-Sobolev spaces for the solution of a transmission problem of elliptic-parabolic type with discontinuous coefficients. More precisely, we consider a bounded domain $\Omega\subset\R^n$ which is divided into two subdomains $\Omega_1,\Omega_2$ separated by a closed contour $\Gamma\subset\Omega$ and a boundary value problem of the form
\begin{equation}
  \label{1-1}
  \begin{aligned}
    A(x,D) u & = f_1\quad \text{ in }\Omega_1,\\
    (A(x,D)-\lambda) u & = f_2\quad\text{ in }\Omega_2,\\
   C(x,D) u & = h\quad \text{ on }\partial\Omega.
  \end{aligned}
  \end{equation}
  Here $A(x,D)$ is a differential operator of order $2m$,  $C(x,D)$ is a column of boundary operators $C_1,\dots,C_m$, and $\lambda$ is a complex parameter.
We assume  $f_k\in L^p(\Omega_k)$ and are looking for a solution $u\in W_p^{2m}(\Omega)$. The top-order coefficients of the operator $A(x,D)$ are assumed to be continuous up to the boundary in each subdomain ${\Omega_k}$ but may have jumps across the interface $\Gamma$. The condition $u\in W_p^{2m}(\Omega)$ leads to the canonical transmission conditions along $\Gamma$, given by
\begin{equation}
  \label{1-2}
   [\![ \partial_\nu^{j-1} u]\!] = 0\quad (j=1,\dots,2m),
\end{equation}
where $[\![ \partial_\nu^{j-1} u]\!] $ stands for the jump of the $(j-1)$-th normal derivative of $u$ along the interface $\Gamma$. Generalizing \eqref{1-2}, we will consider inhomogeneous transmission conditions of the form
\begin{equation}
  \label{1-3}
  B(x,D) u = g,
\end{equation}
where $B=(B_1,\dots,B_{2m})^\top$ with
\[ B_j(x,D) u := \partial_\nu^{j-1} u_1 - \partial_\nu^{j-1} u_2\quad (j=1,\dots,2m).\]
Here we have set $u_k := u|_{\Omega_k}$ for $k=1,2$.

The aim of the paper is to prove uniform a priori estimates for the solutions of \eqref{1-1}, \eqref{1-3} under suitable ellipticity and smoothness assumptions on $A$ and $C$, see Section~2 below for the precise formulations. To give an idea of our results, let us for the moment assume that $f_1=f_2=0$ and $h=0$ in \eqref{1-1}, \eqref{1-3}. In  classical elliptic theory, in the case of an uncoupled system we would expect  a uniform a priori estimate of the form
\[ \|u_1\|_{W_p^{2m}(\Omega_1)} \le C \Big( \sum_{j=1}^{m} \| g_j\|_{W_p^{2m-j+1-1/p}(\Gamma)} + \|u_1\|_{L^p(\Omega_1)}.\Big)\]
On the other hand, the classical parabolic (in the sense of parameter-elliptic) a priori estimate would read as
\[ \norm u_2\norm_{W_p^{2m}(\Omega_2)} \le C \sum_{j=1}^{m} \norm g_j\norm_{W_p^{2m-j+1-1/p}(\Gamma)}.\]
Here $\norm\cdot\norm_{W_p^s} := \|\cdot\|_{W_p^s} + |\lambda|^{s/2m} \|\cdot\|_{L^p}$ is the typical parameter-dependent norm appearing in parabolic theory. Concerning the coupled system \eqref{1-1}, \eqref{1-3}, the question arises if we still have similar estimates for $u_1$ and $u_2$. We will see below that this is true in some sense. More precisely, we will obtain
\begin{align*}
  &\|u_1\|_{W_p^{2m}(\Omega_1)} + \|u_2\|_{W_p^m(\Omega_2)}  \le C \Big( \sum_{j=1}^{2m} \|g_j\|_{W_p^{2m-j+1-1/p}(\Gamma)} + \|u_1\|_{L^p(\Omega_1)}\Big),\\
  & |\lambda|^{1/2} \|u_1\|_{W_p^{m}(\Omega_1)} + \norm u_2\norm_{W_p^m(\Omega_2)}   \le C \Big( \sum_{j=1}^{2m} \norm g_j\norm_{W_p^{2m-j+1-1/p}(\Gamma)} \\
  &\hspace*{20em}+ |\lambda|^{1/2} \|u_1\|_{L^p(\Omega_1)}\Big).
\end{align*}
This can be seen as a mixture of elliptic and parabolic a priori estimates. Note that we do not reach the full order $2m$ with respect to $u_2$ in the first inequality and not the full power $|\lambda|$ with respect to $u_1$ in the second inequality. The general result for $f\not=0$ and $h\not=0$ and the precise formulation are stated in Section~2 below.

Applications of problem \eqref{1-1}, \eqref{1-3} (in its parabolic form, i.e., the parameter $\lambda$ being replaced by the time derivative)  can be found, e.g., in \cite{Geb07}, including the heat equation in a domain with vanishing thermal capacity in some subdomain and a model of an electric field generated by a current in a partially non-conducting domain.   On the other hand, the  problem under consideration is closely related to  spectral problems with indefinite weight functions of the form
\[ (A(x,D)-\lambda\omega(x)) u = f\;\text{ in }\Omega,\quad C(x,D)u = 0\;\text{ on }\partial\Omega.\]
Here $\omega$ is a weight function which may change sign and may vanish on a set of positive measure. Such spectral problems have been investigated, e.g., in a series of papers by Faierman (see \cite{Fai00}--\cite{Fai09}) and by Pyatkov (\cite{Pya98}, \cite{PA02}), see also \cite{Beh12} and the references therein. In particular, in the paper \cite{Fai09} a Calder\'{o}n method of reduction to the boundary was applied to deal with the case where $\omega$ vanishes on a set $\Omega_1$ of positive measure. For this, unique solvability of the Dirichlet boundary value problem in $\Omega_1$ had to be assumed. Transmission problems of purely parabolic type (where the parameter $\lambda$ is present in each subdomain) and $L^p$-a priori estimates for their solution were considered in \cite{ADF97}. Transmission problems in $L^p$ were also studied, with the same methods as in the present note, by Shibata and Shimizu in \cite{SS11}.

A standard approach to \neu{treat  transmission} problems is to use (locally) a reflection technique in one subdomain resulting in a \textit{system} of differential operators which are coupled by the transmission conditions. A general theory of parameter-dependent systems can be found in a series of papers by Volevich and his co-authors (see  \cite{DMV00} and the references therein). Here the so-called Newton polygon method leads to uniform a priori estimates for the solution. However, in the present case the Newton polygon is of trapezoidal form and thus not regular. Therefore, the Newton polygon approach cannot be applied to the transmission problem \eqref{1-1}. On the other side, the resulting system is not \texttt{parameter-elliptic} in the classical sense (\cite{AGRVISH})  and is not covered by the standard \texttt{parameter-elliptic} theory. We also note the connection to singularly perturbed problems where a similar Newton polygon structure appears, cf. \cite{DV00}. The analysis of the elliptic-parabolic system \texttt{below}  also serves as a starting point for more general (and nonlinear) elliptic-parabolic systems as, for instance, appearing  in lithium battery models (see \cite{WXZ06}). A detailed investigation of the nonlinear elliptic-parabolic lithium battery model and solvability in $L^p$-Sobolev spaces can be found in the second author's thesis \cite{Seg13}. In \cite{Latz} and \cite{Dreyer} mathematical models for lithium battery systems can be found which lead to inhomogeneous transmission conditions.

In Section~2 we will state the precise \texttt{assumptions} and the main \texttt{result} of the present paper. The boundary value problem is analyzed by a localization method and the investigation of the model problem in the half-space. An explicit description of the solution of the model problem (in terms of Fourier multipliers) and resulting estimates can be found in Section~3. Finally, the proof of the main a priori estimate is given in Section~4.
%%%%%%%%%%%%%%%%%%%%%%%%%%%%%%%%%%%%%%%%%%%%%%%%%%%%%%%%%%%%%%%%%%%%%%%%%%%%%%%%%%%%%%%%%%%%%%%%%%%%%%%%%%%%%%%%%%%%%%%%%%%%%
\section{Statement of the problem and main result}
Let $1<p<\infty$, $n\in\N$, $k\in\N_0:=\{0,1,2,\dots\}$, and $\Om \subset \R^n$ be open. By $\left(\texttt{L^p}(\Om), \|\cdot\|_{0,p,\Om}\right)$ and $\left(W^k_p(\Om), \|\cdot\|_{k,p,\Om}\right)$ we denote the Lebesgue and Sobolev spaces on $\Om$ with their standard norms. We will further make use of the seminorms $$|u|_{k,p,\Om}:=\sum_{|\al|=k} \|D^{\al}u\|_{0,p,\Om}\quad (k\in \texttt{\N_0}, u\in W^k_p(\Om)),$$
 where we used the standard notation $D^{\al}:= (-i)^{|\al|}(\frac{\partial}{\partial x_1})^{\alpha_1}\ldots (\frac{\partial}{\partial x_n})^{\alpha_n}$.
For real \texttt{non-integer} $s>0$ let $W^s_p(\Om):=B^s_{pp}(\Om)$ denote the Besov space on $\Om$ with its standard norm. Besides the standard norms, for the treatment of parameter-elliptic problems the following parameter-dependent norms will be convenient: Let \texttt{$\theta\in (0,\pi]$ and} let $\la \in \overline{\Si}_{\texttt{\theta}}$ be a complex parameter, varying in a closed sector \texttt{$\overline{\Si}_{\texttt{\theta}}$ with vertex at $0$ where $\Sigma_\theta :=\{z\in \C\setminus\{0\}:\ |\arg(z)| <\theta\}$}. Then for $m\in\N$ and $k\in\{0,1,\dots,2m\}$, we \texttt{define}
\begin{equation}
\ldr u \rdr_{k,p,\Om}:= \|u\|_{k,p,\Om} + |\la|^{\frac{k}{2m}} \|u\|_{0,p,\Om} \quad(u\in W^k_p(\Om)).
\label{paramdepnorm}
\end{equation}
On the boundary, we will consider parameter-dependent trace norms given by
\[ \ldr u\rdr_{k-1/p,p,\Gamma} := \|u\|_{k-1/p,p,\Gamma} + |\la|^{\frac{k-1/p}{2m}} \|u\|_{0,p,\Gamma}\quad (u\in W_p^{k-1/p}(\Gamma)).\]
By
$\mathcal Fu$ we denote the Fourier transform of $u$, and $(\mathcal F'u)(\xi',x_n)$ stands for
 the partial Fourier transform  with respect to the first $n-1$ variables $x':=(x_1,\ldots,x_{n-1})$.
%%%%%%%%%%%%%%%%%%%%%%%%%%%%%%%%%%%%%%%%%%%%%%%%%%%%%%%%%%%%%%%%%%%%%%%%%%%%%%%%%%%%%%%%%%%%%%%%%%%%%%%%%%%%%%%%%%%%%%%%%%%%%%%%%%%
%%%%%%%%%%%%%%%%%%%%%%%%%%%%%%%%%%%%%%%%%%%%%%%%%%%%%%%%%%%%%%%%%%%%%%%%%%%%%%%%%%%%%%%%%%%%%%%%%%%%%%%%%%%%%%%%%%%%%%%%%%%%%%%%%%%

Let $\Om\subset \R^n$ be a bounded domain with boundary $\partial\Om$ of class $C^{2m-1,1}$, and let $\Ga$ be a closed \texttt{$C^{2m-1,1}$} Jordan contour in $\Om$, having no points with $\partial \Om$ in common. Denote by $\Om_1$ and $\Om_2$ the \texttt{resulting} subdomains such that $\Om_1\cap \Om_2=\emptyset$, \texttt{$\partial\Omega_1=\Gamma$,} and $\overline{\Om}= \overline{\Om_1\cup \Om_2}$.  Note that, due to our assumptions, there is no contact point of $\Gamma$ and $\partial\Omega$. We define $u_i:=\left.u\right|_{\Om_i}$ and will consider the differential operators $A_1(x,D)=A(x,D)$ for $x\in \Om_1$ and $\texttt{\tilde A_2}(x,D)-\la= A(x,D)-\la$ for $x\in \Om_2$. Slightly generalizing the form of equation \eqref{1-1}, we \texttt{consider} differential operators of even order $2m$ of the following structure
$$A_1(x,D)= \sum_{|\al|\leq 2m} a^{(1)}_{\al}(x)D^{\al}\quad\text{and}\quad \texttt{\tilde{A}}_2(x,D,\la)= \sum_{|\al|+k\leq 2m} a^{(2)}_{\al k}(x)\la^{k/2m}D^{\al}$$
with $m\in \N$ and $\la\in \overline{\Si}_{\th}$ for some $\th\in[0,\pi)$. Furthermore, let \texttt{the} boundary operators $C_j$ of order $0\leq m_j \leq 2m-1$  be of the form
$$C_j(x,D)= \sum_{|\ga|\leq m_j} c_{j\ga}(x)D^{\ga},$$
being defined on $\partial\Om$. We will write for short $(A,C_1,\ldots,C_m)$ when we refer to the boundary value problem \eqref{1-1}.
%%%%%%%%%%%%%%%%%%%%%%%%%%%%%%%%%%%%%%%%%%%%%%%%%%%%%%%%%%%%%%%%%%%%%%%%%%%%%%%%%%%%%%%%%%%%%%%%%%%%%%%%%%%%%%%%%%%%%%%%%%%%%%%%%%
\begin{assumption}\label{assump1}
\begin{itemize}
	\item[(1)] Smoothness assumptions on the coefficients. We assume
$$a^{(1)}_{\alpha} \in \begin{cases}C(\overline{\Om_1})&\ (|\al|=2m),\\ L^{\infty}(\Om_1)& \ (|\al|< 2m),\end{cases}\qquad  a^{(2)}_{\alpha k} \in \begin{cases}C(\overline{\Om_2})&\ (|\al|=2m),\\ L^{\infty}(\Om_2)& \ (|\al|< 2m)\end{cases}$$
for the coefficients of the differential operators and $c_{j\ga} \in C^{2m-m_j-1,1}(\partial\Om)$ for the coefficients of the boundary operators.
	\item[(2a)] Ellipticity of $A_1$. For the principal symbol $A_1^{0}(x,\xi):= \sum_{|\al|= 2m} a^{(1)}_{\al}(x)\xi^{\al}$, we have $A_1^{0}(x,\xi) \neq 0 \quad (x\in\overline{\Om}_1,\ \xi\in\R^n\sm\{0\})$.
	\item[(2b)] Ellipticity with parameter of the boundary value problem $(\texttt{\tilde{A}}_2,C_1,\ldots,C_m)$. The principal symbol of $\tilde{A}_2$ satisfies
\[
\texttt{\tilde{A}}_2^{0}(x,\xi,\la): = \sum_{|\al|+k=2m} a^{(2)}_{\al k}(x) \la^{k/2m}\xi^{\al} \neq 0\]
\texttt{for all} $x\in\overline{\Om}_2$ \texttt{and all} $(\xi,\la)\in(\R^n\times \bar\Si_{\th})\sm \{(0,0)\}$,
	and the Shapiro-Lopatinskii condition is satisfied for $(\texttt{\tilde{A}_2},C_1,\ldots,C_m)$ at each point $x_0\in \partial \Om$. If $C^{0}_{j}(x,D)= \sum_{|\ga|=m_j} \texttt{c_{j\gamma}(x)D^\gamma}$ denotes the principal symbol of the boundary operator, this condition reads as follows: For $x_0\in \partial\Om$ let the boundary value problem $(\texttt{\tilde A}_2,C_1,\ldots,C_m)$ be rewritten in local coordinates associated with $x_0$, i.e. in coordinates resulting from the original ones by rotation and translation such that the positive $x_n$-axis coincides with the direction of the inner normal vector. Then for all $(\xi',\la)\in(\R^{n-1}\times \bar\Si_{\th})\sm \{(0,0)\}$ and $h_j\in \C$, the ODE problem on the halfline
\begin{align*}
\texttt{\tilde A}_2^{0}(x_0,\xi',D_n,\la) v(x_n) &= 0\quad\hspace{0.45em} \text{in }(0,\infty),\\
C_j^0(x_0,\xi',D_n) v(x_n) &= h_j\quad \text{at}\ x_n=0,\quad (j=1,\ldots,m),\\
v(x_n) &\to 0\quad\hspace{0.35em} (x_n\to \infty)
\end{align*}
admits a unique solution. Here, $D_n := -i\frac\partial{\partial x_n}$.
	\item[(3)] Assumptions on the data. We \texttt{assume}  $f_1\in \texttt{L^p}(\Om_1)$, $f_2\in \texttt{L^p}(\Om_2)$, $g_j \in W_p^{2m-j+1-1/p}(\Ga)$ for $j=1,\ldots,2m$, and $h_j \in W_p^{2m-m_j-1/p}(\partial\Om)$ for $j=1,\ldots,m$.
	\item[(4)] In addition, we assume proper ellipticity, i.e. the polynomials $A_1^0(x,\xi',t)$ and $\texttt{\tilde A}^0_2(x,\xi',t,\la) \in \C[t]$  of order $2m$ from conditions $(2a)$ and $(2b)$ have exactly $m$ roots in each half-plane $\C_{\pm}:=\{z\in\C: \pm \Im z>0\}$ for all $x\in \bar\Om_1$ and $x\in\bar\Om_2$, respectively, and for all  $\xi'\in\R^{n-1}\sm \{0\}$ and $\la\in\bar\Si_{\th}$. Proper ellipticity allows a decomposition of the form $\texttt{A_1^0}(x,\xi',t) = A_{1+}(x,\xi',t) A_{1-}(x,\xi',t)$ with
\begin{equation}\label{decomposition}
A_{1+}(x,\xi',t):= \prod_{j=1}^m (t-\tau_j(x,\xi')) \; \text{and} \; A_{1-}(x,\xi',t):= \prod_{j=m+1}^{2m} (t-\tau_j(x,\xi')),
\end{equation} where $\tau_j$ denote the roots in $\C_+\ (j\leq m)$ and $\C_-\ (j>m)$, respectively. A similar decomposition with an additional dependence on $\la$ also holds for $\texttt{\tilde A_2^0}$. We remark that proper ellipticity holds automatically if $n\ge 3$.
\end{itemize}
\end{assumption}
%%%%%%%%%%%%%%%%%%%%%%%%%%%%%%%%%%%%%%%%%%%%%%%%%%%%%%%%%%%%%%%%%%%%%%%%%%%%%%%%%%%%%%%%%%%%%%%%%%%%%%%%%%%%%%%%%%%%%%%%%%%%%%%
Under these  assumptions, we consider the inhomogeneous transmission boundary value problem
\begin{equation}\label{TP}
\begin{aligned}
A_1(x,D)\ u_1 &= f_1\quad \text{in } \Om_1,\\
\tilde A_2(x,D,\lambda)\ u_2 &= f_2\quad  \text{in } \Om_2,\\
B_j(x,D) u & =  g_j\quad\hspace{0.1em} \text{on }\Ga\quad(j=1,\ldots,2m),\\
C_j(x,D)u_2 &= h_j\quad\text{on } \partial\Om\quad (j=1,\ldots,m).
\end{aligned}
\end{equation}
Here we have set $B_j(x,D) u :=\partial_\nu^{j-1} u_1 - \partial_\nu^{j-1} u_2$ where $\partial_\nu$ denotes the derivative in direction of the outer normal with respect to $\Omega_2$.
Our main result is the following a priori estimate for solutions to \eqref{TP}. Here, a solution of \eqref{TP} is defined as a pair $(u_1,u_2)$ belonging to the Sobolev space $ W_p^{2m}(\Omega_1)\times W_p^{2m}(\Omega_2)$ for which the system \eqref{TP} is satisfied as equality of  $L^p$-functions.
%%%%%%%%%%%%%%%%%%%%%%%%%%%%%%%%%%%%%%%%%%%%%%%%%%%%%%%%%%%%%%%%%%%%%%%%%%%%%%%%%%%%%%%
\begin{theorem}[A priori estimate for the transmission boundary value problem]\label{mainresult_domain}
Let Assumption \ref{assump1} be satisfied and let $u = (u_1,u_2) \in W_p^{2m}(\Om_1)\times W_p^{2m}(\Om_2)$ be a solution to the transmission problem \eqref{TP}. Then there exists $\lambda_0>0$ such that for all $\lambda\in\bar\Sigma_\theta$ with $|\lambda|\ge \lambda_0$ the following estimates hold:
\begin{align}
\|u_1\|_{2m,p,\Om_1} &+ \|u_2\|_{m,p,\Omega_2} + |\lambda|^{1/2} \|u_2\|_{0,p,\Om_2}  \nonumber\\
&  \leq C \biggl( \|f_1\|_{0,p,\Om_1}+ \|f_2\|_{0,p,\Om_2}+ \|u_1\|_{0,p,\Om_1} \nonumber \\
& \qquad +  \sum_{j=1}^{2m}\| g_j\|_{2m-j+1-1/p,p,\Ga} + \sum_{j=1}^{m}\ldr h_j\rdr_{2m-m_j-1/p,p,\partial\Om}\biggr),\label{1-4}\\
\|u_1\|_{2m,p,\Om_1} &+ |\lambda|^{1/2} \|u_1\|_{m,p,\Om_1} + \ldr u_2\rdr_{2m,p,\Om_2} \nonumber\\
&  \leq C \biggl(|\lambda|^{1/2} \|f_1\|_{0,p,\Om_1}
+ \|f_2\|_{0,p,\Om_2} + |\lambda|^{1/2}\|u_1\|_{0,p,\Om_1}\nonumber \\
 & \qquad +  \sum_{j=1}^{2m}\ldr g_j\rdr_{2m-j+1-1/p,p,\Ga} + \sum_{j=1}^{m}\ldr h_j\rdr_{2m-m_j-1/p,p,\partial\Om}\biggr).\label{aprioriabsch_domain}
\end{align}
\end{theorem}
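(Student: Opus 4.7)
The plan is to prove both estimates by a standard localization argument combined with freezing of coefficients, reducing to model problems whose estimates are supplied by Section~3. First I would choose a finite open cover $\{U_j\}_{j\in J}$ of $\overline\Omega$ of sufficiently small diameter $\delta>0$, together with a subordinate partition of unity $\{\varphi_j\}$, and split $J = J_{\mathrm{int},1}\cup J_{\mathrm{int},2}\cup J_{\partial\Omega}\cup J_\Gamma$ according to whether $U_j$ lies in the interior of $\Omega_1$, in the interior of $\Omega_2$, meets $\partial\Omega$, or meets $\Gamma$ (by construction $\Gamma$ and $\partial\Omega$ are disjoint, so the last two are mutually exclusive). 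Then I apply $\varphi_j$ and commute $A_1$, $\tilde A_2$, $B_j$, $C_j$ with $\varphi_j$; the commutator terms involve only derivatives of order $\le 2m-1$ of $u$, hence can be estimated against the full Sobolev norms by interpolation.

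For $j\in J_{\mathrm{int},1}$ I freeze the top-order coefficients of $A_1$ at the center $x_j\in\Omega_1$ and apply the classical whole-space elliptic a priori estimate for the constant-coefficient operator $A_1^0(x_j,D)$; the lower-order and coefficient-perturbation terms are absorbed by choosing $\delta$ small (exploiting the continuity of $a^{(1)}_\alpha$ for $|\alpha|=2m$) and using interpolation. For $j\in J_{\mathrm{int},2}$ and $j\in J_{\partial\Omega}$ I proceed analogously, using the classical parameter-elliptic a priori estimate for $\tilde A_2^0(x_j,D,\lambda)$ in $\R^n$ respectively for the half-space boundary value problem $(\tilde A_2^0,C_1^0,\dots,C_m^0)$; these are the classical Agranovich--Vishik estimates and yield the parameter-dependent norm $\norm{u_2}\norm_{2m,p,\cdot}$ on the left. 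For $j\in J_\Gamma$ I locally flatten $\Gamma$ by a $C^{2m-1,1}$ diffeomorphism, freeze the principal parts of $A_1$ and $\tilde A_2$ at the center $x_j\in\Gamma$, and apply the half-space model problem estimates derived in Section~3. The latter are precisely designed to reproduce the asymmetric left-hand sides of \eqref{1-4} and \eqref{aprioriabsch_domain}, with $u_1$ of order $2m$ coupled to $u_2$ only of order $m$ (respectively the corresponding $\lambda$-weighted version), and to provide the correct trace norms of $g_j$ on the right.

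Summing the local estimates over $j\in J$ with $\varphi_j$-weights and using that $\{\varphi_j^p\}$ forms a partition of unity modulo interpolation error gives
\[
\text{LHS} \le C\bigl(\text{RHS} + \|u_1\|_{2m-1,p,\Omega_1} + \ldr u_2 \rdr_{2m-1,p,\Omega_2}\bigr),
\]
respectively the analogous $\lambda$-weighted version for \eqref{aprioriabsch_domain}. The remaining lower-order remainders are handled as follows: in the $u_2$ component the term $\ldr u_2\rdr_{2m-1,p,\Omega_2}$ is absorbed into the left-hand side by interpolation together with taking $|\lambda|\ge\lambda_0$ sufficiently large, since the parameter-dependent norm satisfies $\ldr v\rdr_{k-1,p} \le \varepsilon \ldr v\rdr_{k,p} + C_\varepsilon |\lambda|^{-1/(2m)}\ldr v\rdr_{k,p}$ up to a factor controlled by large $\lambda$. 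In the $u_1$ component there is no spectral parameter available, so the term $\|u_1\|_{2m-1,p,\Omega_1}$ is reduced to $\|u_1\|_{0,p,\Omega_1}$ via interpolation with a small constant in front of $\|u_1\|_{2m,p,\Omega_1}$; this explains why $\|u_1\|_{0,p,\Omega_1}$ (respectively $|\lambda|^{1/2}\|u_1\|_{0,p,\Omega_1}$) remains on the right-hand side.

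The main obstacle will be the treatment of the model problem near $\Gamma$, since the half-space transmission problem is not parameter-elliptic in the classical sense (the relevant Newton polygon is trapezoidal, as explained in the introduction) and therefore does not fall under the Agranovich--Vishik--Volevich framework. The explicit Fourier-multiplier representation of the solution operator and the resulting estimates, provided in Section~3, are precisely what permits the localization argument above to go through; once these are available, assembling the global estimate is routine.
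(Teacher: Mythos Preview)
Your proposal follows the same route as the paper: localization and coefficient--freezing (outlined in Remark~\ref{2.3}, with details referred to \cite{ADF97}), reduction to constant--coefficient model problems, classical (parameter-)elliptic estimates for the interior and $\partial\Omega$ patches, and the new half-space transmission estimates of Section~4 (Theorems~\ref{4.2} and~\ref{4.3}) for the $\Gamma$--patches. The identification of the $\Gamma$--model problem as the only nonstandard ingredient is exactly the paper's point of view.

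One point in your write-up needs more care. Your absorption step ``$\ldr u_2\rdr_{2m-1,p,\Omega_2}$ is absorbed into the left-hand side by taking $|\lambda|$ large'' is correct for \eqref{aprioriabsch_domain}, whose left side carries the full norm $\ldr u_2\rdr_{2m,p,\Omega_2}$, so that $\ldr u_2\rdr_{2m-1}\le C|\lambda|^{-1/(2m)}\ldr u_2\rdr_{2m}$ can indeed be swallowed. It does \emph{not} work as stated for \eqref{1-4}: there the left side contains only $\|u_2\|_{m,p}+|\lambda|^{1/2}\|u_2\|_{0,p}$, which cannot dominate a remainder of order $2m-1$ (or the small-coefficient order-$2m$ term from freezing) in $u_2$. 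The paper does not spell this out in the localization step, but its half-space analysis indicates the intended mechanism: in the proof of Theorem~\ref{4.2} the $u_2$--portion of the elliptic-type inequality is \emph{derived from} the parameter-elliptic one by first establishing $\norm u_2\norm_{2m}\le C|q|^{m}\sum_j\|\tilde g_j\|_{2m-j+1}$ and then dividing by $|q|^{m}$ (see \eqref{eq4-3}), while the $u_1$--portion is obtained by a separate multiplier argument with the rescaled symbols $L_1^{(0)},L_2^{(0)}$. Correspondingly, in the domain you should first close the localization for \eqref{aprioriabsch_domain} (where your absorption argument is valid) and then use that estimate to handle the $u_2$--perturbation terms that arise when assembling \eqref{1-4}, rather than trying to absorb them directly into the weaker left-hand side.
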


Note that with respect to $g$, inequality \eqref{1-4} is of elliptic type and \eqref{aprioriabsch_domain} is of parameter-elliptic type. Due to the fact that the boundary operators $C_j$ act on $u_2$, we have parameter-elliptic norms with respect to $h_j$ in both inequalities.

%%%%%%%%%%%%%%%%%%%%%%%%%%%%%%%%%%%%%%%%%%%%%%%%%%%%%%%%%%%%%%%%%%%%%%%%%%%%%%%%%%%%%%%%%%%%%%%%%%%%%%%%%%%%%%%%%%%%%%%%%%
\begin{remark}\label{2.3}
Our main task will be to study the problem for constant coefficient operators $A_1(D)$ and $\texttt{\tilde A}_2(D,\lambda)$ in the half-spaces $\R^n_{\pm}$ without lower order terms. This  \texttt{simplification} can be justified by performing a localization procedure, using a finite covering $\overline{\Om}\subset \bigcup_{k=1}^N U_k$ with appropriate open sets $U_k$, a corresponding partition of unity and perturbation results. For a detailed explanation of the localization procedure, we refer to \cite{ADF97}, pp.~151--153, but here we briefly list the types of local problems one has to deal with. If $\overline{U_k}\subset \Om_i$, one faces a local elliptic ($i=1$) or \texttt{parameter-elliptic} $(i=2)$ operator in the whole space. For these situations, the estimates for $u_i$ are well-known results, see \texttt{\cite{ITFSDO}, Theorem~5.3.2,} for \texttt{the} elliptic and \cite{ADF97}, Proposition 2.5, for the \texttt{parameter-elliptic} case. If $U_k\cap \partial\Om\neq \emptyset$, the local problem is a standard boundary value problem in the \texttt{half-space} and the desired estimate is contained in \cite{ADF97}, Proposition 2.6. It remains to consider the case \texttt{where} $U_k$ intersects both $\Om_1$ and $\Om_2$, and in the sequel we restrict our considerations to the corresponding local model problem. \texttt{This} reads as
\begin{equation}\label{TP_HR}
\begin{aligned}
A_1(D)\ u_1&= f_1\quad \text{in } \R^n_+,\\
\texttt{\tilde A}_2(D,\lambda)\ \texttt{\tilde u_2} &= \texttt{\tilde f_2}\quad  \text{in } \R^n_-,\\
D_n^{j-1}\left(u_1 - u_2\right) &= g_j\quad\hspace{0.1em} \text{on }\R^{n-1},\quad(j=1,\ldots,2m).
\end{aligned}
\end{equation}
The reflection $\tau_n: \R^n\to \R^n,\ x\mapsto (x',-x_n)$ will be useful to treat problem \eqref{TP_HR}. Therefore, we will use the notation $A_2(\xi,\lambda):= \texttt{\tilde A}_2(\tau_n\texttt{(\xi)},\lambda) = \texttt{\tilde A}_2(\xi',-\xi_n,\lambda)$ for the symbol of the reflected operator, which is \texttt{parameter-elliptic} in $\R^n_+$. \texttt{We set $u_2(x):= \tilde u_2(\tau_n(x))$ and $f_2(x):=\tilde f_2(\tau_n(x))$.}

By this substitution, we may rewrite \eqref{TP_HR} as \texttt{a} system in the half-space $\R^n_+$:
\begin{equation}
  \label{TPSYS}
  \begin{aligned}
    A(D,\lambda) u & = f\quad \text{ in }\R^n_+,\\
    D_n^{j-1}\left(u_1 + (-1)^j u_2\right) & = g_j\quad(j=1,\ldots,2m) \text{ on }\R^{n-1}.
  \end{aligned}
\end{equation}
Here we have set
\[ A(D,\lambda) := \begin{pmatrix}
  A_1(D) & 0 \\ 0 & A_2(D,\lambda)
\end{pmatrix},\; u:= \binom{u_1}{u_2},\; f:= \binom{f_1}{f_2}.\]
\end{remark}
%%%%%%%%%%%%%%%%%%%%%%%%%%%%%%%%%%%%%%%%%%%%%%%%%%%%%%%%%%%%%%%%%%%%%%%%%%%%%%%%%%%%%%%%%%%%%%%%%%%%%%%%%%%%%%%%%%%%%%%%%
\begin{remark}\label{problem}
We see that the determinant of the principal symbol $\det(A^0(\xi,\lambda))=\det(A(\xi,\lambda))=A_1(\xi)A_2(\xi,\lambda)$ vanishes at the points $(0,\lambda)\in \R^{n}\times (0,\infty)$. Hence the standard theory for parameter-elliptic systems is not applicable in this case. Due to continuity and homogeneity of the principal symbols we have the estimate \begin{equation}\label{nellipticity} |A_1(\xi)A_2(\xi,\lambda)|\geq C |\xi|^{2m}(|\lambda|+|\xi|^{2m})\end{equation} with a constant $C>0$. Operators whose principal symbols allow an estimate of the form \eqref{nellipticity} are also called N-elliptic with parameter. Here the `N' stands for the Newton polygon which is related to the principal symbol. In case of \eqref{nellipticity}, the Newton polygon is not regular, and therefore this equation is not covered by the results on N-ellipticity as in \cite{DMV00}.
\end{remark}
%%%%%%%%%%%%%%%%%%%%%%%%%%%%%%%%%%%%%%%%%%%%%%%%%%%%%%%%%%%%%%%%%%%%%%%%%%%%%%%%%%%%%%%%%%%%%%%%%%%%%%%%%%%%%%%%%%%%%%%%%%
\begin{remark}
The boundary conditions in \eqref{TP_HR} are called canonical transmission conditions. In the case $g_j=0$, they are equivalent to the condition $U\in W_p^{2m}(\R^n)$ for  $$U(x',x_n):=\left\{\begin{aligned} u_1(x',x_n) \quad (x_n\geq 0),\\ \tilde u_2(x',x_n) \quad (x_n<0).\end{aligned}\right. $$
Note that in \eqref{TP_HR}  the number of conditions equals the order of the operator, in contrast to boundary value problems. We will show in Lemma~\ref{absolutelyelliptic} below that the ODE system corresponding to the transmission problem \eqref{TP_HR} is uniquely solvable. This is an analogue of the Dirichlet boundary conditions which are absolutely elliptic, i.e., for every properly elliptic operator the Dirichlet boundary value problem satisfies the Shapiro-Lopatinskii condition.
\end{remark}

%%%%%%%%%%%%%%%%%%%%%%%%%%%%%%%%%%%%%%%%%%%%%%%%%%%%%%%%%%%%%%%%%%%%%%%%%%%%%%%%%%%%%%%%%%%%%%%%%%%%%%%%%%%%%%%%%%%%%%%%%%
%%%%%%%%%%%%%%%%%%%%%%%%%%%%%%%%%%%%%%%%%%%%%%%%%%%%%%%%%%%%%%%%%%%%%%%%%%%%%%%%%%%%%%%%%%%%%%%%%%%%%%%%%%%%%%%%%%%%%%%%%%
\section{Fundamental solutions and solution operators}

To represent the solution in terms of  fundamental solutions, we start with the observation that the ODE system obtained from \eqref{TP_HR}  by partial Fourier transform is uniquely solvable. This is the analogue of the Shapiro-Lopatinskii condition for transmission problems.
For detailed discussions of this condition for boundary value problems, we refer to \cite{DHP}, Section 6.2 and \cite{WLOK}, Chapter 11.  The assertion of the following lemma is formulated for our situation of one elliptic and one \texttt{parameter-elliptic} operator but of course it also holds in the cases when both operators are of the same type.

To simplify our notation, we define  $q:= \la^{1/2m}$ and consider the differential operator $\tilde A_2(D,q) = \sum_{\texttt{|\al|+k \le 2m}} a^{(2)}_{\al k} q^{k}D^{\al} $ with $q\in\overline\Sigma:= \overline\Si_{\theta/(2m)}$.

%%%%%%%%%%%%%%%%%%%%%%%%%%%%%%%%%%%%%%%%%%%%%%%%%%%%%%%%%%%%%%%%%%%%%%%%%%%%%%%%%%%%%%%%%%%%%%%%%%%%%%%%%%%%%%%%%%%%%%%%%%
\begin{lemma}\label{absolutelyelliptic}
Suppose the operators $A_1(x,D)$ and $\texttt{\tilde{A}_2}(x,D,q)$ are elliptic and para\-meter-elliptic in $\bar\Sigma$, respectively. Fix $x_0 \in \partial \Om,\ \xi'\in\R^{n-1}\sm\{0\},\ q\in \overline{\Sigma}$ and let $h_j \in \C \; (j=1,\ldots,2m)$. Then the ODE problem
\begin{equation}\label{GDGLTP}
\begin{aligned}
A^0_1(x_0,\xi',D_n)u_1 &= 0\quad \quad(x_n >0)\\
\texttt{\tilde{A}^0_2}(x_0,\xi',D_n,q)\texttt{\tilde u_2} &= 0\quad \quad(x_n <0)\\
\left.D_n^{j-1}(u_1 - \texttt{\tilde u_2})\right|_{x_n = 0} &= h_j \quad\hspace{0.55em} (j = 1,\ldots,2m)\\
\hspace{5em}u_1(x_n) &\rightarrow  0  \quad\hspace{0.85em} (x_n\rightarrow \infty) \\
\hspace{5em}\texttt{\tilde u_2}(x_n) &\rightarrow  0  \quad\hspace{0.85em} (x_n\rightarrow -\infty)
\end{aligned}
\end{equation}
admits a unique solution.
\end{lemma}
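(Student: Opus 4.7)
My approach is a dimension count combined with a gluing trick. By the proper ellipticity assumption (condition (4) of Assumption~\ref{assump1}, carried over to the parameter-elliptic setting), factor $A_1^0(x_0,\xi',t)=A_{1+}(t)A_{1-}(t)$ and $\tilde A_2^0(x_0,\xi',t,q)=\tilde A_{2+}(t,q)\tilde A_{2-}(t,q)$ where $A_{1\pm}$ and $\tilde A_{2\pm}$ are monic of degree $m$ with all roots in $\C_\pm$, respectively. Solutions of $A_1^0(D_n)u_1=0$ on $(0,\infty)$ that tend to $0$ at $+\infty$ form exactly $\ker A_{1+}(D_n)$, an $m$-dimensional space, since the remaining factor $A_{1-}$ contributes only exponentials that blow up at $+\infty$. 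Analogously, the decaying solutions of $\tilde A_2^0(D_n,q)\tilde u_2=0$ on $(-\infty,0)$ form the $m$-dimensional $\ker\tilde A_{2-}(D_n,q)$. Hence the pair $(u_1,\tilde u_2)$ lives in a $2m$-dimensional space, matching the number of transmission conditions.

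Unique solvability of \eqref{GDGLTP} thus reduces to showing that the trace map $\Phi(u_1,\tilde u_2):=\bigl(D_n^{j-1}(u_1-\tilde u_2)(0)\bigr)_{j=1}^{2m}$ on this space is an isomorphism, and by dimension it suffices to prove injectivity. Suppose $\Phi(u_1,\tilde u_2)=0$. Then $u_1$ and $\tilde u_2$ match in every derivative of order $0,\dots,2m-1$ at $x_n=0$, so the pasted function $U$ defined by $U:=u_1$ on $[0,\infty)$ and $U:=\tilde u_2$ on $(-\infty,0)$ lies in $C^{2m-1}(\R)$; consequently its distributional derivatives up to order $2m$ coincide with the piecewise classical ones, without any $\delta$-contribution at the origin. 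Applying the operator $P(D_n):=A_{1+}(D_n)\tilde A_{2-}(D_n,q)$ of order $2m$, one has $P(D_n)U=0$ on $(0,\infty)$ because $A_{1+}(D_n)u_1=0$, and $P(D_n)U=0$ on $(-\infty,0)$ because $\tilde A_{2-}(D_n,q)\tilde u_2=0$. Therefore $P(D_n)U=0$ on all of $\R$.

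Since $P$ is a constant-coefficient polynomial in $D_n$ of degree $2m$, every solution of $P(D_n)U=0$ on $\R$ is a linear combination of $x_n^k e^{ix_n\tau}$ with $\tau$ running through the roots of $P$, that is, either a root of $A_{1+}$ (in $\C_+$) or a root of $\tilde A_{2-}$ (in $\C_-$). Each such term grows at one end of $\R$ and decays at the other, classified by the sign of $\Im\tau$. The decay $U=u_1\to 0$ at $+\infty$ annihilates the coefficients attached to roots of $\tilde A_{2-}$, while the decay $U=\tilde u_2\to 0$ at $-\infty$ annihilates those attached to roots of $A_{1+}$. Hence $U\equiv 0$, so $u_1=\tilde u_2=0$ and $\Phi$ is injective. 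I expect the only mildly delicate step to be the bookkeeping when $A_{1+}$ or $\tilde A_{2-}$ has multiple roots (so that the basis involves factors $x_n^k$), but the decay classification by $\Im\tau$ still applies term by term, leaving the conclusion intact.
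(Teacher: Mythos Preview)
Your proof is correct and follows essentially the same approach as the paper: both arguments factor through the product operator $P(D_n)=A_{1+}(D_n)\tilde A_{2-}(D_n,q)$ and exploit that $P$ has no real roots, so no nonzero solution of $P(D_n)U=0$ on $\R$ can decay (or even stay bounded) at both ends. The paper packages this as linear independence of the union of the two $m$-dimensional bases---hence a nonvanishing Wronskian at $x_n=0$---while you package it as injectivity of the trace map via gluing, but the substance is identical.
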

%%%%%%%%%%%%%%%%%%%%%%%%%%%%%%%%%%%%%%%%%%%%%%%%%%%%%%%%%%%%%%%%%%%%%%%%%%%%%%%%%%%%%%%%%%%%%%%%%%%%%%%%%%%%%%%%%%%%%%%%%%
\begin{proof}
In the sequel, we do not write down the dependence of the polynomials and their roots on $x_0$ explicitly and fix $\xi'\in\R^{n-1}\sm\{0\}$ as well as $q\in \overline{\Sigma}$. We decompose $A^0_1(\xi',t)$ and $\texttt{\tilde{A}^0}_2(\xi',t,q)$ as indicated in \eqref{decomposition} into $A^0_{i\pm}(\xi',t)$. Let $\MM_1$ denote the \texttt{$m$-dimensional} space of stable solutions to
$$ A_1^0(\xi',D_n) v = 0 \quad (x_n>0), \quad v\rightarrow 0 \quad (x_n\rightarrow \infty)$$ and let $\MM_2$ denote the \texttt{$m$-dimensional} space of stable solutions to
$$ \texttt{\tilde{A}_2}^0(\xi',D_n,q) w = 0 \quad (x_n<0), \quad w\rightarrow 0 \quad (x_n\rightarrow -\infty).$$
Let $B_1:=\{v_1,\ldots,v_m\}$ and $B_2:=\{w_1,\ldots,w_m\}$ be a basis of $\MM_1$ and $\MM_2$, respectively. Then $B:=B_1\cup B_2$ is obviously a subset of the \texttt{$2m$-dimensional} space of  \weg{stable} solutions to the equation
\begin{equation}\label{produktgleichung}
\texttt{P}(\xi',D_n,q) u(x_n) := A^0_{1+}(\xi',D_n)\mathcal{A}^0_{2-}(\xi',D_n,q) u(x_n) = 0 \quad \text{on}\ \R
\end{equation}
and $B$ is linearly independent: Suppose there are nontrivial $\al_j, \be_j \in\C\; (j=1,\ldots,m) $ with $$\weg{u:= }\sum_{j=1}^m \al_j v_j =  \sum_{j=1}^m \be_j w_j.$$
Then \eqref{produktgleichung} would possess a solution which is bounded on the entire real line, which contradicts the fact that the polynomial $\texttt{P}(\xi',t,q)$ has only roots with nonzero imaginary part. Hence $B$ is a fundamental system to \eqref{produktgleichung} and the determinant of the Wronskian matrix $W(x_n)$ is nonzero:
\begin{equation}\label{wronski}
\det W(x_n) =\det\left(
\begin{array}[h]{ccc}
v_1(x_n)&\cdots & w_m(x_n)\\ \vdots& &\vdots\\ D_n^{2m-1}v_1(x_n)&\cdots & D_n^{2m-1}w_m(x_n)	
\end{array}
\right)\neq 0 \quad (x_n\in\R).
\end{equation}
Now suppose that $(v,w)$ is a solution to \eqref{GDGLTP}. Then there exist constants $\al_i, \be_i \in \C$ for $i=1,\ldots,m$, such that
\begin{equation*} v(x_n) = \sum_{j=1}^m \al_j v_j(x_n),\quad \text{and}\quad w(x_n) = -\sum_{j=1}^m \be_j w_j(x_n).\end{equation*} If we plug in this approach into the transmission conditions, we obtain the system of linear equations to determine $\al_j$ and $\be_j$: \begin{equation*}\left(
\begin{array}[h]{ccc}
v_1(0)&\cdots & w_m(0)\\ \vdots & &\vdots\\ D_n^{2m-1}v_1(0)&\cdots & D_n^{2m-1}w_m(0)	
\end{array}
\right)\left(
\begin{array}[h]{c}
\al_1\\ \vdots\\ \be_{m}	
\end{array}
\right) = \left(
\begin{array}[h]{c}
h_1\\ \vdots\\ h_{2m}	
\end{array}
\right).\end{equation*}
From \eqref{wronski} it now follows that the coefficients exist and are uniquely determined, which proves the assertion.
\end{proof}

From now on, we restrict ourselves to the model problem \eqref{TPSYS} which is the only non-standard step in the proof of the main theorem, see Remark~\ref{2.3}. We first consider the case $f=0$ in \eqref{TPSYS}, i.e. we study
\begin{equation}
  \label{eq3-1}
  \begin{aligned}
    A(D,q) u & = 0\quad \text{ in }\R^n_+,\\
    B(D_n) u & = g\quad\text{ on }\R^{n-1}.
  \end{aligned}
\end{equation}
Here $u=(u_1,u_2)^\top$, $g=(g_1,\dots,g_{2m})^\top$,
\[ A(D,q)=\begin{pmatrix} A_1(D) & 0 \\ 0 & A_2(D,q)\end{pmatrix},\quad
B(D_n) = \begin{pmatrix} B^{(1,1)}(D_n) & B^{(1,2)}(D_n)\\ B^{(2,1)}(D_n) & B^{(2,2)}(D_n)\end{pmatrix}\]
with
\begin{align*}
  B^{(1,1)}(D_n) & := \big( D_n^{j-1}\big)_{j=1,\dots,m},& B^{(1,2)}(D_n) &:= \big( (-1)^{j}D_n^{j-1}\big)_{j=1,\dots,m},\\
  B^{(2,1)}(D_n) & := \big( D_n^{j-1}\big)_{j=m+1,\dots,2m},& B^{(2,2)}(D_n) &:= \big( (-1)^{j}D_n^{j-1}\big)_{j=m+1,\dots,2m}.
\end{align*}
Note that $B^{(1,1)}(D_n)$ and $B^{(2,1)}(D_n)$ are also called generalized Dirichlet and  Neumann conditions, respectively.

Due to Lemma~\ref{absolutelyelliptic}, the ODE system corresponding to \eqref{eq3-1} is uniquely solvable. The main step in the proof of Theorem~\ref{mainresult_domain} will be to find a priori estimates for the fundamental solutions of this ODE system. In the following, $I_k$ stands for the ($k\times k$)-dimensional unit matrix.

\begin{definition}\label{def-fundamental-solution}
The fundamental solution
\[ \omega\colon (\R^{n-1}\setminus\{0\})\times (0,\infty)\times\bar\Sigma\to \C^{2\times 2m},\, (\xi',x_n,q)\mapsto \omega(\xi',x_n,q)\]
 is defined as the unique solution of the ODE system (in $x_n$)
\begin{align*}
  A(\xi',D_n,q) \omega(\xi',x_n,q) & = 0 \quad (x_n>0),\\
  B(D_n) \omega(\xi',x_n,q)\big|_{x_n=0} & = I_{2m},\\
  \omega(\xi',x_n,q) & \to 0 \quad (x_n\to\infty).
\end{align*}
\end{definition}

Following an idea of Leonid Volevich \cite{vol}, we represent the solutions in a specific way. For this, we consider the elliptic boundary value problem $(A_1(D), B^{(1,1)}(D_n))$ and the parameter-elliptic boundary value problem $(A_2(D,q), B^{(2,2)}(D_n))$ separately. It is well known that the (generalized) Dirichlet and Neumann boundary conditions are absolutely elliptic,  hence the Shapiro-Lopatinskii condition holds for both  subproblems. We will call the canonical basis for these boundary value problems the basic solutions $Y^{(1)}$ and $Y^{(2)}$. More precisely, we define:

\begin{definition}
  \label{basicfunctions}
  We define the basic solution
  \[ {Y}^{(1)}\colon (\R^{n-1}\setminus\{0\})\times (0,\infty)\to \C^{1\times m}, \, (\xi',x_n)\to {Y}^{(1)}(\xi',x_n),\]
  as the unique solution of the ODE system
  \begin{equation}\label{eq3-2}
  \begin{aligned}
    A_1(\xi',D_n) {Y}^{(1)}(\xi',x_n) & = 0 \quad (x_n>0),\\
    B^{(1,1)}(D_n) {Y}^{(1)}(\xi',x_n)\big|_{x_n=0} & = I_m,\\
    {Y}^{(1)}(\xi',x_n) & \to 0\quad (x_n\to\infty).
  \end{aligned}
  \end{equation}
  Analogously, the basic solution
  \[ {Y}^{(2)}\colon (\R^{n-1}\setminus\{0\})\times (0,\infty)\times\bar\Sigma\to \C^{1\times m},\,
  (\xi',x_n,q)\mapsto {Y}^{(2)}(\xi',x_n,q),\]
  is defined as the unique solution of the ODE system
   \begin{equation}\label{eq3-3}
  \begin{aligned}
    A_2(\xi',D_n,q) {Y}^{(2)}(\xi',x_n,q) & = 0 \quad (x_n>0),\\
    B^{(2,2)}(D_n) {Y}^{(2)}(\xi',x_n,q)\big|_{x_n=0} & = I_m,\\
    {Y}^{(2)}(\xi',x_n,q) & \to 0\quad (x_n\to\infty).
    \end{aligned}
  \end{equation}
  We set
  \[ {Y}(\xi',x_n,q) = \Big( {Y}_{k}^{(j)}(\xi',x_n,q)\Big)_{\substack{j=1,2\hfill \\k=1,\dots,2m}} := \begin{pmatrix} {Y}^{(1)}(\xi',x_n) & 0 \\ 0 & {Y}^{(2)}(\xi',x_n,q)\end{pmatrix}.\]
\end{definition}

The advantage of the basic solutions ${Y}^{(1)}, {Y}^{(2)}$ lies in the fact that classical (parame\-ter-)elliptic estimates are easily available for them. We have to compare these solutions with the fundamental solution $\omega$. Let $j\in\{1,\dots,2m\}$. As the function $\omega_{1j}$ is a solution of $A_1(\xi',D_n)\omega_{1j}=0\;(x_n>0)$, it can be written as a linear combination of the basic solutions. Therefore, we can write
\[ \omega_{1j}(\xi',x_n,q) = \sum_{k=1}^m {Y}^{(1)}_k(\xi',x_n,q) \psi_{kj}(\xi',q)\]
with unknown coefficients $\psi_{kj}$. The analogous representation holds for  $\omega_{2j}$. In matrix notation, we obtain
\[ \omega(\xi',x_n,q) = {Y}(\xi',x_n,q)\Psi(\xi',q) \]
with $\Psi(\xi',q) = \big(\psi_{kj}(\xi',q)\big)_{k,j=1,\dots,2m}$. By the definition of the fundamental solution, we have
\[ I_{2m} = B(D_n)\omega(\xi',x_n,q)\big|_{x_n=0} = B(D_n){Y}(\xi',x_n,q)\big|_{x_n=0} \Psi(\xi',q).\]
Therefore,
\begin{equation}
\label{eq3-4}
\Psi(\xi',q) \\
= \begin{pmatrix}
  I_m & \hspace*{-1em} B^{(1,2)}(D_n){Y}^{(2)}(\xi',x_n,q)\big|_{x_n=0} \\
  B^{(2,1)}(D_n){Y}^{(1)}(\xi',x_n)\big|_{x_n=0} & I_m
\end{pmatrix}^{-1}.
\end{equation}

%%%%%%%%%%%%%%%%%%%%%%%%%%%%%%%%%%%%%%%%%%%%%%%%%%%%%%%%%%%%%%%%%%%%%%%%%%%%%%%%%%%%%%%%%%%%%%%%%%%%%%%%%%%%%%%%%%%%%%%%%%%%%%%%%%%%
\begin{remark}\label{coeffproperty}
Due to the unique solvability of the equations \eqref{eq3-2} and \eqref{eq3-3}, we have for $(\xi',q)\in (\R^{n-1}\setminus\{0\})\times \overline{\Sigma}$ the following scaling properties for all $r>0$:
\begin{eqnarray*}
{Y}^{(1)}\left(\tfrac{\xi'}{r},r x_n\right) & = & {Y}^{(1)}(\xi',x_n) \Delta_1(r) ,\\
{Y}^{(2)}\left(\tfrac{\xi'}{r},r x_n,\tfrac{q}{r}\right) &=&  {Y}^{(2)}(\xi',x_n,q) \Delta_2(r),
\end{eqnarray*}
where we used the abbreviations
\begin{align*}
 \De_1(r)&:= \diag(1,r,\ldots,r^{m-1}),\\
 \De_2(r)& := \diag(r^m,\ldots,r^{2m-1}) = r^m \De_1(r).
 \end{align*}
 (See also \eqref{eq3-8} below for an explicit representation of $Y^{(1)}$ and $Y^{(2)}$.)
We will apply this with $r:=|\xi'|$ for ${Y}^{(1)}$ and $r:= |\xi'|+|q|$ for ${Y}^{(2)}$. Note that these scaling properties also yield the identities
\begin{equation}
  \label{eq3-9}
\begin{aligned}
B^{(2,1)}(D_n){Y}^{(1)}(\xi',0) & = \Delta_2(r) B^{(2,1)}(D_n){Y}^{(1)}(\tfrac{\xi'}r,0)\Delta_1(r)^{-1},\\
  B^{(1,2)}(D_n){Y}^{(2)}(\xi',0,q) & =  \Delta_1(r) B^{(1,2)}(D_n){Y}^{(2)} (\tfrac{\xi'}r, 0,\tfrac q r)\Delta_2(r)^{-1}.
\end{aligned}
\end{equation}
\end{remark}
%%%%%%%%%%%%%%%%%%%%%%%%%%%%%%%%%%%%%%%%%%%%%%%%%%%%%%%%%%%%%%%%%%%%%%%%%%%%%%%%%%%%%%%%%%%%%%%%%%%%%%%%%%%%%%%%%%%%%%%%%%%%%%%%%%%%
We summarize the representation of the solution in form of solution operators:

\begin{lemma}
  \label{3.5} Let $g\in\prod_{j=1}^{2m} W_p^{2m-j+1-1/p}(\R^{n-1})$, and let $u\in W_p^{2m}(\R^n_+)$ be a solution of \eqref{eq3-1}. Let $\tilde g\in \prod_{j=1}^{2m} W_p^{2m-j+1}(\R^n_+)$ be an extension of $g$ to the half-space. Then $u$ has the form
  \[ u = T_1\tilde g + T_2(\partial_n \tilde g),\]
  where $\partial_n := \frac{\partial}{\partial x_n}$ and where the solution operators $T_1$ and $T_2$ are given by
  \begin{align*}
    (T_1\varphi)(x',x_n) & = -\int_0^\infty (\mathcal F')^{-1}(\partial_n {Y})(\xi',x_n+y_n,q)\Psi(\xi',q)(\mathcal F'\varphi)(\xi',y_n)dy_n,\\
    (T_2\varphi)(x',x_n) & = -\int_0^\infty (\mathcal F')^{-1} {Y}(\xi',x_n+y_n,q)\Psi(\xi',q)(\mathcal F'\varphi)(\xi',y_n)dy_n.
  \end{align*}
  Here the basic solution ${Y}(\xi',x_n,q)$ is defined in Definition~\ref{basicfunctions}, and the coefficient matrix $\Psi(\xi',q)$ is defined in \eqref{eq3-4}.
\end{lemma}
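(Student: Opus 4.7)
\textbf{Proof plan for Lemma \ref{3.5}.} The strategy is to first derive a Poisson-type representation of any solution $u$ in terms of the fundamental solution $\omega$, and then to rewrite the boundary data using the extension $\tilde g$ via an integration by parts in the normal direction.

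For the first step, I would apply the partial Fourier transform $\mathcal F'$ to \eqref{eq3-1}. For fixed $\xi'\in\R^{n-1}\setminus\{0\}$, the function $\hat u(\xi',\cdot):=(\mathcal F' u)(\xi',\cdot)$ satisfies the ODE system
\begin{align*}
A(\xi',D_n,q)\,\hat u(\xi',x_n) &= 0 \quad (x_n>0),\\
B(D_n)\,\hat u(\xi',x_n)\big|_{x_n=0} &= (\mathcal F' g)(\xi'),\\
\hat u(\xi',x_n) &\to 0 \quad (x_n\to\infty),
\end{align*}
where the decay at infinity is a consequence of $u\in W_p^{2m}(\R^n_+)$. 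By Lemma~\ref{absolutelyelliptic}, this ODE system has a unique solution, and by linearity and Definition~\ref{def-fundamental-solution} that solution is given by $\hat u(\xi',x_n)=\omega(\xi',x_n,q)\,(\mathcal F' g)(\xi')$. Hence $u(x',x_n)=(\mathcal F')^{-1}\bigl[\omega(\xi',x_n,q)(\mathcal F' g)(\xi')\bigr]$, and using the factorization $\omega=Y\Psi$ established before \eqref{eq3-4}, we obtain
\[
u(x',x_n)=(\mathcal F')^{-1}\bigl[Y(\xi',x_n,q)\Psi(\xi',q)(\mathcal F' g)(\xi')\bigr].
\]

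For the second step, I would rewrite the trace $\mathcal F' g(\xi')=(\mathcal F'\tilde g)(\xi',0)$ by integration by parts. For each fixed $x_n>0$ and $\xi'\neq 0$, consider the auxiliary function
\[
F(y_n):=Y(\xi',x_n+y_n,q)\,\Psi(\xi',q)\,(\mathcal F'\tilde g)(\xi',y_n), \quad y_n\ge 0.
\]
Its derivative is
\[
F'(y_n)=(\partial_n Y)(\xi',x_n+y_n,q)\Psi(\mathcal F'\tilde g)(\xi',y_n)+Y(\xi',x_n+y_n,q)\Psi(\mathcal F'\partial_n\tilde g)(\xi',y_n).
\]
Since $Y(\xi',\cdot,q)$ is stable (it decays exponentially as $x_n\to\infty$) and $(\mathcal F'\tilde g)(\xi',\cdot)$ lies in an $L^p$-Sobolev class in $y_n$, one has $F(y_n)\to 0$ as $y_n\to\infty$. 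Evaluating at $y_n=0$ gives $F(0)=Y(\xi',x_n,q)\Psi(\xi',q)(\mathcal F' g)(\xi')=\omega(\xi',x_n,q)(\mathcal F' g)(\xi')$. Integrating the identity $F'=\ldots$ on $(0,\infty)$ and rearranging yields
\begin{align*}
\omega(\xi',x_n,q)(\mathcal F' g)(\xi')=&-\int_0^\infty (\partial_n Y)(\xi',x_n+y_n,q)\Psi(\xi',q)(\mathcal F'\tilde g)(\xi',y_n)\,dy_n\\
&-\int_0^\infty Y(\xi',x_n+y_n,q)\Psi(\xi',q)(\mathcal F'\partial_n\tilde g)(\xi',y_n)\,dy_n.
\end{align*}
Applying $(\mathcal F')^{-1}$ and recognizing the two terms as $T_1\tilde g$ and $T_2(\partial_n\tilde g)$ completes the proof.

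The main technical obstacle is ensuring that all the formal manipulations, especially the vanishing of the boundary term $F(\infty)=0$ and the interchange of $(\mathcal F')^{-1}$ with the $y_n$-integration, are rigorously justified in the $L^p$ setting. I would handle this by first carrying out the argument for $\tilde g$ in a dense subclass (say, Schwartz functions in $x'$ with compact support in $\overline{x_n\ge 0}$), where pointwise decay and Fubini are immediate, and then extending to the full Sobolev class $\prod_j W_p^{2m-j+1}(\R^n_+)$ by continuity once the $L^p$-mapping properties of $T_1$ and $T_2$ (established in subsequent parts of the paper via Fourier multiplier theorems) are in place.
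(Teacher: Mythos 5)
Your proposal is correct and takes essentially the same approach as the paper: reduce to the ODE system via partial Fourier transform to get $\hat u = \omega\,\mathcal F'g$, then apply the ``Volevich trick'' (writing the boundary trace as $-\int_0^\infty \partial_{y_n}[\cdots]\,dy_n$, which is precisely your integration-by-parts argument with $F(y_n)$) and split off the two terms using $\omega = Y\Psi$. Your closing paragraph on density and justification of the limit $F(\infty)=0$ spells out details the paper leaves implicit, but the underlying argument is identical.
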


\begin{proof}
  By definition of the fundamental solution, we have $u=(\mathcal F')^{-1}\omega(\cdot,x_n)\mathcal Fg$. Writing this in the form
  \[ u = -\int_0^\infty \tfrac{\partial}{\partial {y_n}}\big[ (\mathcal F')^{-1} \omega(\cdot,x_n+y_n)(\mathcal F'\tilde g)(\cdot,y_n)\big]dy_n, \]
  (``Volevich trick'') and noting that $\omega(\xi',x_n,q)={Y}(\xi',x_n,q)\Psi(\xi',q)$, we obtain the above representation.
\end{proof}

 Our proofs are based on the Fourier multiplier concept, see, e.g., \cite{DHP}. Here a function $m\in L^\infty(\R^n)$ is called an $L^p$-Fourier multiplier if $T_m\colon \mathscr S(\R^n)\to L^\infty(\R^n)$, $f\mapsto \mathcal F^{-1} m\mathcal F$  (being defined on the Schwartz space $\mathscr S(\R^n)$) extends to a continuous mapping $T_m\in L(L^p(\R^n))$. We will apply Michlin's theorem to prove the Fourier multiplier property. For this, we introduce the notion of a Michlin function.

 \begin{definition}
   \label{3.6} Let $M\colon (\R^{n-1}\setminus \{0\})\times \bar\Sigma\to \C^{k\times \ell}$ be a matrix-valued function. Then we call $M$ a Michlin function if $M(\cdot, q)\in C^{[\frac n2]+1}(\R^{n-1}\setminus\{0\})$ for all $q\in\bar\Sigma$ and if there exists a constant $C>0$, independent of $q$, $\gamma'$, and $\xi'$, such that
   \[ |\xi'|^{|\gamma'|}\Big| \partial_{\xi'}^{\gamma'} M(\xi',q)\Big| \le C \quad (\xi'\in\R^{n-1}\setminus\{0\},\, q\in\bar\Sigma,\, \gamma'\in\N_0^{n-1}\text{ with }|\gamma'|\le [\tfrac n2]+1).\]
 \end{definition}

 \begin{remark}
   \label{3.7}
   a) Michlin's theorem (see \cite{ITFSDO}, Section 2.2.4) states that every Michlin function is an $L^p$-Fourier multiplier for all $p\in (1,\infty)$.

   b) By the product rule one immediately sees that the product of Michlin functions is a Michlin function, too.

   c) Let $M\colon (\R^{n-1}\setminus\{0\})\times \bar\Sigma\to \C^{k\times k}$ be a Michlin function, and let $M(\xi',q)$ be invertible for all $\xi'$ and $q$. If the norm of the inverse matrix is bounded by a constant independent of $\xi'$ and $q$, then also $(\xi',q)\mapsto M(\xi',q)^{-1}$ is a Michlin function. This follows iteratively noting that
   \[\xi_j\partial_{\xi_j}M(\xi',q)^{-1} = M(\xi',q)^{-1} \Big( \xi_j\partial_{\xi_j} M(\xi',q)\Big) M(\xi',q)^{-1}.\]
    \end{remark}

Now we will show that the basic solution ${Y}$ as well as the coefficient matrix $\Psi$ satisfy uniform estimates.
Here and in the following, $C$ stands for a generic constant which may vary from inequality to inequality but is independent of the variables appearing in the inequality. We will scale the functions with $|\xi'|$ and with
\begin{equation}
  \label{eq3-5}
  \rho := \rho(\xi',q) := |\xi'|+|q|.
\end{equation}

\begin{lemma}
  \label{3.8}
  a)  For all $\ell\in\N_0$ and all $x_n>0$, the function
  \[M_1^{(\ell)}(\xi',x_n,q) := x_n\begin{pmatrix}
    |\xi'|^{-\ell} & 0\\ 0 &\rho^{-\ell}
  \end{pmatrix}
  \partial_n^{\ell+1}{Y}(\xi',x_n,q)\begin{pmatrix} \Delta_1(|\xi'|) & 0 \\ 0 & \Delta_2(\rho)
  \end{pmatrix}\]
  is a Michlin function with constant independent of $x_n\in (0,\infty)$.

  b) The functions
  \begin{align*}
    C_1(\xi',q) & := \Delta_1(\rho)^{-1} \Big(B^{(1,2)}(D_n){Y}^{(2)}\Big)(\xi',0,q) \; \Delta_2(\rho),\\
    C_2(\xi') & := \Delta_2(|\xi'|)^{-1} \Big( B^{(2,1)}(D_n){Y}^{(1)}\Big) (\xi',0)  \; \Delta_1(|\xi'|)
  \end{align*}
  are Michlin functions.
\end{lemma}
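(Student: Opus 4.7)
The plan is to reduce both parts to statements about smooth functions on compact parameter sets via the scaling identities in Remark~\ref{coeffproperty}, and then to derive Michlin bounds by the chain rule. For part (a) the key observation is that, after differentiating $Y^{(1)}(\xi',x_n)\Delta_1(|\xi'|)=Y^{(1)}(\xi'/|\xi'|,|\xi'|x_n)$ in $x_n$, the factor $|\xi'|^{\ell+1}$ produced cancels the prefactor $|\xi'|^{-\ell}$ up to a single surviving power of $|\xi'|$, which combines with $x_n$ into $y_n:=|\xi'|x_n$. Thus the top-left block of $M_1^{(\ell)}$ equals $y_n(\partial_n^{\ell+1}Y^{(1)})(\xi'/|\xi'|,y_n)$; using the analogous identity for $Y^{(2)}$ with $r=\rho$ (and $\Delta_2(\rho)=\rho^m\Delta_1(\rho)$), the bottom-right block equals $y_n(\partial_n^{\ell+1}Y^{(2)})(\xi'/\rho,y_n,q/\rho)$ with $y_n:=\rho x_n$.

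Next I would invoke ellipticity of $A_1$ and parameter-ellipticity of $A_2$: the roots in the upper half plane of $A_1^0(\eta,\cdot)$ and of $A_2^0(\eta,\cdot,s)$ have imaginary parts bounded uniformly below on the compact sets $\{|\eta|=1\}$ and $\{(\eta,s):|\eta|+|s|=1,\,s\in\bar\Sigma\}\setminus\{(0,0)\}$, respectively. Writing $Y^{(i)}$ as a sum of exponentials $e^{i\tau_k(\cdot)y_n}$ with coefficients smooth in the remaining parameters, every $y_n$-derivative is of polynomial-exponential type, so the prefactor $y_n$ is absorbed by the exponential decay and $M_1^{(\ell)}$ is bounded in $L^\infty$ uniformly in $x_n,\xi',q$. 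For the Michlin estimate I would then apply the chain rule: $\partial_{\xi_j}$ landing on $\xi'/|\xi'|$ or $\xi'/\rho$ contributes $O(1/|\xi'|)$ (using also that $\partial_{\xi_j}\rho=\xi_j/|\xi'|$ is bounded by $1$), while landing on $y_n$ it contributes $(\xi_j/|\xi'|)x_n=\eta_j y_n/|\xi'|$; multiplication by $|\xi'|^{|\gamma'|}$ exactly cancels the negative powers of $|\xi'|$, and after each differentiation one stays in the polynomial-exponential class on the compact parameter set. Iterating up to $|\gamma'|=[n/2]+1$ proves (a).

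For part (b) the scaling identities \eqref{eq3-9} give directly $C_1(\xi',q)=(B^{(1,2)}(D_n)Y^{(2)})(\xi'/\rho,0,q/\rho)$ and $C_2(\xi')=(B^{(2,1)}(D_n)Y^{(1)})(\xi'/|\xi'|,0)$, which are smooth and bounded on the same compact parameter sets as above; the Michlin bounds then follow from the same chain-rule bookkeeping, restricted to $x_n=0$. The main obstacle is the handling of the noncompact $x_n$-direction in (a): the explicit weight $x_n$ in the definition of $M_1^{(\ell)}$ is precisely what combines with the one surviving power of the scale to form $y_n$, which is then absorbed by the exponential decay of the basic solutions. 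Without this weight, the $x_n$-derivatives $\partial_n^{\ell+1}Y^{(i)}$ would only be pointwise bounded, and one would not obtain uniform estimates in $x_n$.
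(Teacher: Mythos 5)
Your reduction of $M_1^{(\ell)}$ via the scaling identities of Remark~\ref{coeffproperty} is correct: the two blocks do equal $y_n(\partial_n^{\ell+1}Y^{(1)})(\xi'/|\xi'|,y_n)$ and $y_n(\partial_n^{\ell+1}Y^{(2)})(\xi'/\rho,y_n,q/\rho)$ in the scaled variable, and part (b) similarly reduces to the basic solutions evaluated on the normalized parameter sets. This is a genuinely different organization from the paper's argument, which never explicitly isolates the scaled function $Y(\eta',y_n,s)$: instead the paper substitutes $\tau\mapsto\tau/\rho$ inside the Agmon--Douglis--Nirenberg contour representation \eqref{eq3-8}, pulls the factor $\rho^{-j-|\gamma'|}$ out by homogeneity of the integrand, and then bounds $(\rho x_n)e^{i\rho x_n\tau}$ on the fixed contour by the elementary inequality $te^{-t}\le 1$. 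Both strategies locate the uniform bound on a compact set; yours does so in one go at the level of $Y$ itself, the paper's at the level of the contour integrand.

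There is, however, a gap at the step where you write $Y^{(i)}$ ``as a sum of exponentials $e^{i\tau_k(\cdot)y_n}$ with coefficients smooth in the remaining parameters''. When roots of $A_{1+}$ or $A_{2+}$ collide, neither the roots nor the partial-fraction coefficients are individually smooth, so this expansion cannot be differentiated in $\eta'$ termwise. What is true---and what the Michlin argument actually needs---is that the scaled functions $(\eta',y_n,s)\mapsto\partial_n^{k}Y^{(i)}$ together with all $\eta'$-derivatives up to order $[n/2]+1$ are bounded by $Ce^{-cy_n}$ uniformly on the normalized compact sets. This uniform control of all parameter derivatives is exactly what the contour representation \eqref{eq3-8} delivers: the contour $\tilde\gamma_2$ can be fixed in $\{\Im\tau\ge c>0\}$, the integrand $N_k/A_{i+}$ is smooth on a neighbourhood of the compact set and so has bounded derivatives, and differentiation passes under the integral. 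Without some such device (the contour integral, or an explicit smooth-dependence-plus-decay statement for the stable solutions of the ODE boundary value problem), the chain-rule bookkeeping you describe rests on an unproved assertion. If you replace the ``sum of exponentials'' sentence by an appeal to the representation \eqref{eq3-8} to justify the polynomial-exponential structure and the uniform bounds for $\partial_{\eta'}^\alpha Y^{(i)}$, the rest of your argument goes through.
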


\begin{proof}
We use an explicit description of the basic solutions. According to \cite{ADN59}, Section~1, there exist polynomials (with respect to $\tau$)  $N_1(\xi',\tau),\ldots,N_m(\xi',\tau)$ and $ N_{m+1}(\xi',\tau,q),\ldots, N_{2m}(\xi',\tau,q) $ such that
\begin{eqnarray*}
\frac{1}{2\pi i} \int_{\gamma_1}\frac{N_k(\xi',\tau)}{A_{1+}(\xi',\tau)}\tau^{j-1}d\tau &=& \de_{jk}\quad (j,k=1,\ldots, m),\\ \frac{1}{2\pi i} \int_{\gamma_2}\frac{N_k(\xi',\tau,q)}{A_{2+}(\xi',\tau,q)}\tau^{j-1}d\tau &=& \de_{jk}\quad (j,k=m+1,\ldots,2m)
\end{eqnarray*}
with $\delta_{jk}$ being the Kronecker delta symbol.
Here $\gamma_1=\gamma_1(\xi')$ is a smooth closed contour in the upper half-plane  $\C_+$, depending on $\xi'$ and enclosing the $m$ roots of the polynomial $A_1(\xi',\cdot)$ with positive imaginary part, while $\gamma_2=\gamma_2(\xi',q)$ is a smooth closed contour in $\C_+$ depending on $(\xi',q)$ and enclosing the $m$ roots of $A_2(\xi',\cdot,q)$ in $\C_+$.
Moreover, $N_k$ is positively homogeneous in its arguments of degree $m-k$ for $k=1,\dots,2m$ while $A_{1+}$ and $A_{2+}$ are positively homogeneous in their arguments of degree $m$.

This leads to the following representation for the basic solutions ${Y}^{(1)} = ({Y}_k^{(1)})_{k=1,\dots,m}$ and ${Y}^{(2)} = ({Y}^{(2)}_k)_{k=m+1,\dots,2m}$:
\begin{equation}\label{eq3-8}
\begin{aligned}
  {Y}_k^{(1)} & = \frac1{2\pi i} \int_{\gamma_1} \frac{N_k(\xi',\tau)}{A_{1+}(\xi',\tau)}e^{ix_n\tau}d\tau\quad (k=1,\dots,m),\\
  {Y}_k^{(2)} & = \frac1{2\pi i} \int_{\gamma_2} \frac{N_k(\xi',\tau,q)}{A_{2+}(\xi',\tau,q)}e^{ix_n\tau}d\tau\quad (k=m+1,\dots,2m).
\end{aligned}
\end{equation}
To prove part a), we will show that for all $j\in\N_0$
\begin{equation}
  \label{eq3-6}
  x_n|\xi'|^{k-j} \partial_n^j{Y}^{(1)}_k(\xi',x_n)\;\text{ and }\; x_n \rho^{k-j}\partial_n^j {Y}_k^{(2)}(\xi',x_n,q)
\end{equation}
are Michlin functions. Setting $j:=\ell+1$ and noting the definitions of $\Delta_1$ and $\Delta_2$, this immediately implies a). Similarly, to show b) we have to prove that
\begin{equation}
  \label{eq3-7}
 |\xi'|^{k-j-1} \partial_n^j{Y}^{(1)}_k(\xi',0)\;\text{ and } \;\rho^{k-j-1}\partial_n^j {Y}_k^{(2)}(\xi',0,q)
\end{equation}
are Michlin functions. We will restrict ourselves to ${Y}_k^{(2)}$, the result for ${Y}_k^{(1)} $ follows in the same way.

For $j\in\N_0$ and $k\in\{m+1,\dots,2m\}$, we substitute $\tau\mapsto \tau/\rho$ in the integral representation \eqref{eq3-8} and obtain
\begin{align*}
  \partial_{\xi'}^{\gamma'} \big[x_n&\rho^{k-j} \partial_n^j{Y}_k^{(2)}(\xi',x_n,q)\big] \\
  &= \frac1{2\pi i}\int_{\gamma_2(\xi',q)} \partial_{\xi'}^{\gamma'}\Big[ \rho^{k-j} \frac{N_k(\xi',\tau,q)}{A_{2+}(\xi',\tau,q)}\Big] \tau^j x_n e^{ix_n\tau}d\tau \\
  & = \frac1{2\pi i} \int_{\gamma_2(\xi'/\rho, q/\rho)} \partial_{\xi'}^{\gamma'}\Big[ \rho^{k-j} \frac{N_k(\xi', \rho\tau,q)}{A_{2+}(\xi',\rho\tau,q)}\Big] (\rho\tau)^j x_ne^{i\rho x_n \tau} \rho d\tau  \\
   & = \frac1{2\pi i} \int_{\tilde\gamma_2} \rho^{j} \partial_{\xi'}^{\gamma'} \big[H_k(\xi',\rho\tau,q) \big] \tau^j (\rho x_n)e^{i\rho x_n \tau}  d\tau
\end{align*}
with $H_k(\xi',\rho,\tau) := \rho^{k-j} N_k(\xi', \rho\tau,q) / A_{2+}(\xi',\rho\tau,q)$. Note for the first equality that it is not necessary to differentiate the contour $\gamma_2(\xi',q)$ because it may be chosen locally independent of $\xi'$. In the last equality, we replaced the contour $\gamma_2(\frac{\xi'}\rho, \frac q\rho)$ by a fixed contour $\tilde \gamma_2$ which is possible by a compactness argument.

Due to the properties of $N_k$ and $A_{2+}$, the function $H_k$ is homogeneous of degree $-j$ in its arguments. Therefore, $\partial_{\xi'}^{\gamma'}H_k$ is homogeneous of degree $-j-|\gamma'|$ in its arguments, and we obtain
\[ \partial_{\xi'}^{\gamma'} \big[ H_k(\xi',\rho\tau,q) \big]= \rho^{-j-|\gamma'|} \Big(\partial_{\xi'}^{\gamma'} H_k\Big) (\tfrac{\xi'}\rho, \tau, \tfrac q\rho).\]
From the fact that $\tilde \gamma_2$ may be chosen in $\C_+$ and  the elementary inequality $te^{-t}\le 1$ $(t \ge 0)$ we get
\[ \big| (\rho x_n)e^{i\rho x_n\tau}\big| = (\rho x_n) e^{- \rho x_n\Im\tau} \le \frac{1}{\Im \tau} \le C\]
for $\tau\in\tilde\gamma_2$. Inserting this and the homogeneity of $H_k$ into the above representation, we see
\[ \Big| \partial_{\xi'}^{\gamma'} \big[x_n\rho^{k-j} \partial_n^j{Y}_k^{(2)}(\xi',x_n,q)\big] \Big| \le C \rho^j \rho^{-j-|\gamma'|} \le C |\xi'|^{-|\gamma'|}\]
which shows \eqref{eq3-6}. In the same way, for the proof of \eqref{eq3-7} we set $x_n=0$ in the above integral representation and obtain
\begin{align*}
 \Big|\partial_{\xi'}^{\gamma'} \big[&\rho^{k-j-1} \partial_n^j{Y}_k^{(2)}(\xi',0,q)\big] \Big|  = \Big|\frac1{2\pi i} \int_{\tilde\gamma_2} \rho^{j} \partial_{\xi'}^{\gamma'} \big[H_k(\xi',\rho\tau,q) \big] \tau^j   d\tau\Big|\\
 & \le C \rho^j \rho^{-j-|\gamma'|} \le C |\xi'|^{-|\gamma'|}.
\end{align*}
This finishes the proof of \eqref{eq3-6} and \eqref{eq3-7} for ${Y}^{(2)}_k$. For ${Y}_k^{(1)}$, we use the substitution $\tau\mapsto \tau/{|\xi'|}$ in the integral representation. As indicated above, a) and b) are immediate consequences of \eqref{eq3-6} and \eqref{eq3-7}, respectively.
\end{proof}

The last lemma in connection with the following result is the essential step for the proof of the a priori estimates from the main theorem.

\begin{lemma}
  \label{3.9}
  The functions
  \begin{align*}
   M_2(\xi',q) & :=  \begin{pmatrix}
    \Delta_1(|\xi'|)^{-1} & 0 \\ 0 & |\xi'|^{-m}\Delta_1(\rho)^{-1}
  \end{pmatrix} \Psi(\xi',q)
  \begin{pmatrix}
    \Delta_1(|\xi'|) & 0 \\ 0 & |\xi'|^{m}\Delta_1(\rho)
  \end{pmatrix},\\[1em]
  \tilde M_2(\xi',q)     &:= \begin{pmatrix}
    |\xi'|^{-1} I_m & 0 \\ 0 & \rho^{-1} I_m
  \end{pmatrix} M_2(\xi',q)
  \begin{pmatrix}
    |\xi'| I_m & 0 \\ 0 & \rho I_m
  \end{pmatrix}
  \end{align*}
  are Michlin functions.
\end{lemma}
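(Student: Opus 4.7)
The approach is to compute $M_2^{-1}$ in closed form, show directly that it is a Michlin function, and then recover $M_2$ via Remark~\ref{3.7}(c); $\tilde M_2$ is obtained as a byproduct once the block structure of $M_2$ is made explicit.

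The first step is the explicit formula. Combining \eqref{eq3-4}, the identities $B^{(1,2)}Y^{(2)}(\xi',0,q)=\Delta_1(\rho)C_1\Delta_2(\rho)^{-1}$ and $B^{(2,1)}Y^{(1)}(\xi',0)=\Delta_2(|\xi'|)C_2\Delta_1(|\xi'|)^{-1}$ from Lemma~\ref{3.8}(b), and $\Delta_2(r)=r^m\Delta_1(r)$, a direct block multiplication with $s(\xi',q):=|\xi'|/\rho\in(0,1]$ gives
\[
M_2^{-1}(\xi',q)=\begin{pmatrix} I_m & A(\xi',q) \\ B(\xi',q) & I_m\end{pmatrix},\quad A:=s^m\Delta_1(1/s)C_1,\ B:=\Delta_1(s)C_2.
\]
Here $s^m\Delta_1(1/s)=\diag(s^m,s^{m-1},\ldots,s)$ and $\Delta_1(s)=\diag(1,s,\ldots,s^{m-1})$ have entries that are nonnegative integer powers of $s$. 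A routine computation shows that $s=|\xi'|/(|\xi'|+|q|)$ satisfies the Michlin estimate $|\xi'|^{|\gamma'|}|\partial_{\xi'}^{\gamma'}s|\le C$ uniformly in $q\in\bar\Sigma$; hence all its powers and, by Remark~\ref{3.7}(b), the products $A$ and $B$ are Michlin. Thus $M_2^{-1}$ is a Michlin function.

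The second step is to invert $M_2^{-1}$ uniformly. The block inversion formula gives
\[
M_2=\begin{pmatrix}(I-AB)^{-1} & -(I-AB)^{-1}A\\ -B(I-AB)^{-1} & I_m+B(I-AB)^{-1}A\end{pmatrix}
\]
provided $I-AB$ is invertible. Since $\|s^m\Delta_1(1/s)\|=s$ and $\|\Delta_1(s)\|\le 1$ on $[0,1]$, one has the crucial estimate $\|AB\|\le s\,\|C_1\|_\infty\|C_2\|_\infty$. Pick $s_0>0$ so that $\|AB\|\le 1/2$ for $s\le s_0$; there the Neumann series yields $\|(I-AB)^{-1}\|\le 2$. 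For $s\ge s_0$, the scaling invariance built into $s$, $C_1(\xi',q)$ and $C_2(\xi')$ confines the problem to the compact set $\{(\xi',q):\rho=1,\,s\ge s_0\}$, on which Lemma~\ref{absolutelyelliptic} ensures that $\Psi^{-1}$, and therefore $I-AB$, is invertible; continuity yields a uniform bound. Combining the two regimes gives $|\det(I-AB)|\ge c_0>0$ globally, so Remark~\ref{3.7}(c) makes $(I-AB)^{-1}$ Michlin, and substituting into the block formula shows $M_2$ is Michlin. (Alternatively, since $\|M_2\|$ is uniformly bounded by the same two-regime argument, Remark~\ref{3.7}(c) applied directly to $M_2^{-1}$ yields the Michlin property of $M_2$.)

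For $\tilde M_2$, observe that $\tilde M_2$ differs from $M_2$ only by scalar conjugation, so its blocks are $(\tilde M_2)_{11}=(M_2)_{11}$, $(\tilde M_2)_{22}=(M_2)_{22}$, $(\tilde M_2)_{12}=s^{-1}(M_2)_{12}$, and $(\tilde M_2)_{21}=s\,(M_2)_{21}$. The potentially dangerous factor $s^{-1}$ is absorbed by the $s^m\Delta_1(1/s)$ inside $A$:
\[
(\tilde M_2)_{12}=-(I-AB)^{-1}\,s^{m-1}\Delta_1(1/s)\,C_1,\qquad (\tilde M_2)_{21}=-s\Delta_1(s)\,C_2\,(I-AB)^{-1},
\]
and both $s^{m-1}\Delta_1(1/s)=\diag(s^{m-1},\ldots,1)$ and $s\Delta_1(s)=\diag(s,\ldots,s^m)$ consist of bounded, Michlin entries. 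Hence every block of $\tilde M_2$ is Michlin, completing the proof. The main obstacle is the uniform lower bound $|\det(I-AB)|\ge c_0$: away from $s=0$ this requires the unique solvability from Lemma~\ref{absolutelyelliptic} together with compactness, while near $s=0$ it rests on the precise $O(s)$ decay of $\|AB\|$ coming from the diagonal structure of $A$.
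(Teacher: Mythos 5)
Your proof is correct and follows essentially the same strategy as the paper: write $M_2^{-1}$ explicitly via Lemma~\ref{3.8}(b) and the scaling relations \eqref{eq3-9}, reduce to bounding the Schur complement, treat $s=|\xi'|/\rho$ small by a Neumann series (using $\|AB\|\le s\,\|C_1\|\|C_2\|$, which is exactly the paper's $\Lambda^{-1}$ bound), treat $s\ge s_0$ by homogeneity/compactness plus the invertibility coming from Lemma~\ref{absolutelyelliptic}, and then invoke Remark~\ref{3.7}(c). Your choice of the other Schur complement $(I-AB$ rather than $I-BA)$ and the parameter $s$ in place of the paper's $\Lambda=1/s$ are cosmetic, and your explicit bookkeeping of how the factor $s^{-1}$ in the $(1,2)$-block of $\tilde M_2$ is absorbed by $s^m\Delta_1(1/s)$ merely spells out what the paper asserts in one line.
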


\begin{proof}
By Lemma~\ref{3.8} b), we have
\[ \Psi(\xi',q) = \begin{pmatrix}
  I_m & \Delta_1(\rho) C_1 (\xi',q) \Delta_2(\rho)^{-1}\\
  \Delta_2(|\xi'|) C_2(\xi') \Delta_1(|\xi'|)^{-1} & I_m
\end{pmatrix}^{-1}\]
with Michlin functions $C_1$ and $C_2$. For $M_2$ we obtain
\begin{equation}
\label{eq3-10}
 M_2(\xi',q) =
\begin{pmatrix}
  I_m & \big(\frac{|\xi'|}{\rho}\big)^m\Delta_1\big(\frac{\rho}{|\xi'|}\big) C_1(\xi',q)\\
  \Delta_1\big(\frac{|\xi'|}{\rho}\big) C_2(\xi')  & I_m
\end{pmatrix}^{-1}.
\end{equation}
By a homogeneity argument we see that $\Delta_1(|\xi'|/\rho)$ and $(|\xi'|/\rho)^m \Delta_1(\rho/|\xi'|)$ are Michlin functions, and therefore the matrix on the right-hand side of \eqref{eq3-10} is a Michlin function. In order to  apply Remark~\ref{3.7} c), we have to show that the norm of $M_2(\xi',q)$ is uniformly bounded.

For this, we write $M_2(\xi',q)$ in the form of a Schur complement: For an invertible block matrix, we have
\[ \begin{pmatrix} I_m & A^{(1,2)} \\ A^{(2,1)} & I_m\end{pmatrix}^{-1}
= \begin{pmatrix}
  I_m + A^{(1,2)} S^{-1} A^{(2,1)} & - A^{(1,2)} S^{-1}\\
  - S^{-1} A^{(2,1)} & S^{-1}
\end{pmatrix}\]
with $S:= I_m-A^{(2,1)} A^{(1,2)}$. Applied to the matrix $M_2$, we obtain
\begin{equation}
  \label{eq3-12}
M_2(\xi',q) = \begin{pmatrix}
  I_m + \big(\frac{|\xi'|}{\rho}\big)^m\Delta_1\big(\frac{\rho}{|\xi'|}\big) C_1 S^{-1}\Delta_1\big(\frac{|\xi'|}{\rho}\big) C_2 &
  - \big(\frac{|\xi'|}{\rho}\big)^m\Delta_1\big(\frac{\rho}{|\xi'|}\big) C_1 S^{-1}\\
  - S^{-1} \Delta_1\big(\frac{|\xi'|}{\rho}\big) C_2 & S^{-1}
\end{pmatrix}
\end{equation}
with
\begin{equation}\label{eq3-11}
 S(\xi',q) := I_m - \big(\tfrac{|\xi'|}{\rho}\big)^m \Delta_1\big(\tfrac{|\xi'|}{\rho}\big) C_2(\xi') \Delta_1\big(\tfrac{\rho}{|\xi'|}\big) C_1(\xi',q).
\end{equation}
By \eqref{eq3-9}, the matrices $C_1$ and $C_2$ and, consequently, the matrix $M_2$ are homogeneous of degree 0 in their arguments. Thus we can write $S$ in the form
\[ S(\xi',q) = S\Big(\frac {\xi'}{|\xi'|}, \frac{q}{|\xi'|}\Big)\quad (\xi'\in\R^{n-1}\setminus\{0\},\, q\in\bar\Sigma).\]
We set $\eta':=\xi'/|\xi'|$ and
\[ \Lambda:= \frac{\rho}{|\xi'|} = \frac{|\xi'|+ |q| } {|\xi'|}= 1+\frac{|q|}{|\xi'|} \]
and write $S$ as
\[ S(\xi',q) = I_m - \Lambda^{-m} \Delta_1\Big( \frac{1}{\Lambda}\Big)C_2(\eta')
\Delta_1(\Lambda) C_1\Big(\eta',\frac q{|\xi'|}\Big). \]
The matrices $C_1, C_2$, and $\Delta_1(1/\Lambda)$ are bounded for all $\xi'\in\R^{n-1}\setminus\{0\}$ and $q\in\bar\Sigma$.
By $|\Delta_1(\Lambda)|\le C \Lambda^{m-1}$ for all $\Lambda \geq 1$, we see that there exists a $\Lambda_0>1$ such that
\[ \Big|  \Lambda^{-m} \Delta_1\Big( \frac{1}{\Lambda}\Big)C_2(\eta')
\Delta_1(\Lambda) C_1\Big(\eta',\frac q{|\xi'|}\Big) \Big| \le \frac 12  \]
holds for all $\xi'\in\R^{n-1}\setminus\{0\}$ and $q\in\bar\Sigma$ with $|q|\ge \Lambda_0\,|\xi'|$. For these $\xi'$ and $q$, a Neumann series argument shows that the norm of $S^{-1}(\xi',q)$ is bounded by 2.

For $\xi'\in\R^{n-1}\setminus\{0\}$ and $q\in\bar\Sigma$ with $|q|\le \Lambda_0|\xi'|$, the tuple $(\xi'/|\xi'|, q/|\xi'|)$ belongs to the compact set $\big\{ (\eta',\tilde q): |\eta'|=1, \, \tilde q\in \bar\Sigma,\, |\tilde q|\le \Lambda_0\big\}$. Now we use the fact that for all $\xi'\in\R^{n-1}\setminus\{0\}$ and $q\in\bar\Sigma$, the matrix $B(D_n){Y}(\xi',0,q)$ is invertible, and therefore  the matrix on the right-hand side of \eqref{eq3-10} is invertible, too. This yields the invertibility of $S$, and by continuity  the inverse matrix $S^{-1}(\xi',q)$ is bounded   for these $\xi'$ and $q$.

Therefore, we have seen that $|S^{-1}(\xi',q)|\le C$ holds for all $\xi'\in\R^{n-1}\setminus\{0\}$ and $q\in\bar\Sigma$. From the explicit description of $M_2(\xi',q)$ in \eqref{eq3-12} and the uniform boundedness of the other coefficients in \eqref{eq3-12}, we see that $|M_2(\xi',q)|\le C$ holds for all $\xi'$ and $q$. By Remark~\ref{3.7} c), $M_2$ is a Michlin function.

The above proof also shows that the modification $\tilde M_2$ is a Michlin function. Note that $S(\xi',q)$ remains unchanged and that we obtain an additional factor $\rho/|\xi'|$ in the right upper corner which does not affect the boundedness.
 \end{proof}

\section{Proof of the a priori estimate}

%%%%%%%%%%%%%%%%%%%%%%%%%%%%%%%%%%%%%%%%%%%%%%%%%%%%%%%%%%%%%%%%%%%%%%%%%%%%%%%%%%%%%%%%%%%%%%%%%%%%%%%%%%%%%%%

In this section, we will investigate the mapping properties of the solution operators $T_1, T_2$ introduced in Lemma~\ref{3.5}. As above, let $\rho := |\xi'|+|q|$. In the following, we will use the abbreviations $D':= -i(\frac{\partial}{\partial x_1}\cdots\frac\partial{\partial x_{n-1}})$ and  $L(D',q):= (\mathcal F')^{-1} L(\xi',q) \mathcal F'$. Based on Lemma~\ref{3.8} and \ref{3.9} and on the continuity of the Hilbert transform, it is not difficult to obtain the following result.

\begin{lemma}
  \label{4.1}
  a) Let
  \[ L_1(\xi',q) := \begin{pmatrix}
    \rho^m|\xi'|^m & 0 \\ 0 & \rho^{2m}
  \end{pmatrix},\quad L_2(\xi',q) :=
  \begin{pmatrix}
    \rho^m |\xi'|^m \Delta_1(|\xi'|)^{-1} & 0 \\ 0 & \rho^m\Delta_1(\rho)^{-1}
  \end{pmatrix}.
  \]
  Then for all $\varphi\in\mathscr S(\R^n_+)^{2m}$ and all $\ell\in\N_0$ we have
  \[ \left\| L_1(D',q) \begin{pmatrix}
    |D'|^{-\ell} & 0 \\ 0 & (|D'|+|q|)^{-\ell}
  \end{pmatrix}\partial_n^\ell T_1\varphi \right\|_{L^p(\R^n_+)} \le C \| L_2(D',q) \varphi\|_{L^p(\R^n_+)}.\]
  The same holds when $L_1$ and $L_2$ are replaced by $L_1^{(0)}:=|\xi'|^m\rho^{-m}L_1$ and $L_2^{(0)}:=|\xi'|^m\rho^{-m} L_2$, respectively.

  b) Let
  \[ \tilde L_2(\xi',q) :=
  \begin{pmatrix}
    \rho^m |\xi'|^{m-1} \Delta_1(|\xi'|)^{-1} & 0 \\ 0 & \rho^{m-1}\Delta_1(\rho)^{-1}
  \end{pmatrix}. \]
  Then for all $\varphi\in\mathscr S(\R^n_+)^{2m}$ and all $\ell\in\N_0$ we have
  \[ \left\| L_1(D',q) \begin{pmatrix}
    |D'|^{-\ell} & 0 \\ 0 & (|D'|+|q|)^{-\ell}
  \end{pmatrix}\partial_n^\ell T_2\varphi \right\|_{L^p(\R^n_+)} \le C \| \tilde L_2(D',q) \varphi\|_{L^p(\R^n_+)}.\]
  \end{lemma}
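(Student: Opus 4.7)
The plan is to combine the Volevich-type integral representation from Lemma~\ref{3.5} with the Michlin bounds of Lemmas~\ref{3.8} and \ref{3.9}, and to reduce the $x_n$-direction to the $L^p(\R_+)$-boundedness of the Stieltjes-type operator $f\mapsto \int_0^\infty f(y_n)/(x_n+y_n)\,dy_n$, which is standard from the $L^p$ theory of the Hilbert transform on $\R$. For part~a) I would start from
\[ \partial_n^\ell T_1\varphi(x',x_n) = -\int_0^\infty (\FF')^{-1}\big[\partial_n^{\ell+1}{Y}(\xi',x_n+y_n,q)\Psi(\xi',q)\big](\FF'\varphi)(\cdot,y_n)\,dy_n,\]
apply $L_1(D',q)\diag(|D'|^{-\ell},(|D'|+|q|)^{-\ell})$ on the left, and move it under the integral.

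The core algebraic step is the clean factorisation
\[ L_1(\xi',q)\diag(|\xi'|^{-\ell},\rho^{-\ell})\,\partial_n^{\ell+1}{Y}(\xi',x_n+y_n,q)\,\Psi(\xi',q) = \frac{1}{x_n+y_n}\,M_1^{(\ell)}(\xi',x_n+y_n,q)\,M_2(\xi',q)\,L_2(\xi',q),\]
where $M_1^{(\ell)}$ and $M_2$ are the Michlin functions of Lemma~\ref{3.8}~a) and Lemma~\ref{3.9}. I would check the identity block by block, using the relation $\Delta_2(\rho)=\rho^m\Delta_1(\rho)$ to match the weights of the first and second block-columns; all scalar powers of $|\xi'|$ and $\rho$ and the matrices $\Delta_1,\Delta_2$ then cancel exactly, and by Remark~\ref{3.7}~b) the composite symbol $M_1^{(\ell)}M_2$ is Michlin with constant uniform in $x_n+y_n>0$ and $q\in\bar\Sigma$.

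Once this factorisation is in place, Michlin's theorem (Remark~\ref{3.7}~a)) implies that the Fourier multiplier with symbol $(M_1^{(\ell)}M_2)(\cdot,x_n+y_n,q)$ is bounded on $L^p(\R^{n-1})$ with a uniform bound $C$. Combined with Minkowski's inequality in $x'$ this gives
\[ \big\|L_1(D',q)\diag(\ldots)\partial_n^\ell T_1\varphi(\cdot,x_n)\big\|_{L^p(\R^{n-1})} \le C\int_0^\infty \frac{\|L_2(D',q)\varphi(\cdot,y_n)\|_{L^p(\R^{n-1})}}{x_n+y_n}\,dy_n,\]
and after taking $L^p$ in $x_n\in\R_+$ the Stieltjes bound finishes the proof of a) for $(L_1,L_2)$. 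The variant with $(L_1^{(0)},L_2^{(0)})$ is immediate since $L_1L_2^{-1}=L_1^{(0)}(L_2^{(0)})^{-1}$; the factorisation then uses the identical Michlin function.

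For part~b) the same scheme is applied to $T_2$ with $\partial_n^\ell{Y}$ (one derivative less) replacing $\partial_n^{\ell+1}{Y}$. The corresponding Michlin bound for $\partial_n^\ell{Y}$ with the adjusted weight is the $j=\ell$ instance of \eqref{eq3-6} in the proof of Lemma~\ref{3.8}, so no new analytic input is required. Because of the missing $\partial_n$, one factor of $|\xi'|$ in the first block and of $\rho$ in the second must migrate from $L_2$ into the Michlin part, which is exactly what replaces $L_2$ by $\tilde L_2=L_2\diag(|\xi'|^{-1}I_m,\rho^{-1}I_m)$. I expect the main obstacle to be the pure bookkeeping: confirming that the scalar weights $|\xi'|^k,\rho^k$ and the matrix weights $\Delta_1,\Delta_2$ collapse to leave exactly $L_2$ (respectively $\tilde L_2$) on one side and a genuine Michlin symbol on the other. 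Once that cancellation is verified, everything else reduces to Michlin's theorem, Minkowski's inequality, and the Hilbert-transform bound.
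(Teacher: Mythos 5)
Your proposal is correct and follows essentially the same route as the paper: the Volevich integral representation from Lemma~\ref{3.5}, the scalar/matrix weight cancellation giving the factorisation $x_n^{-1}M_1^{(\ell)}M_2$ (resp.\ $\tilde M_2$ for part~b), Michlin's theorem for uniformity in $x_n+y_n$ and $q$, Minkowski's inequality, and the $L^p$-boundedness of the one-sided Hilbert transform. The only cosmetic difference is that the paper phrases part~b) as ``replace $M_2$ by $\tilde M_2$'' while you describe the migration of the extra $\diag(|\xi'|,\rho)$ factor through $\Psi$ directly, but these are the same bookkeeping.
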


\begin{proof}
a) For fixed $\ell\in\N_0$, let $\tilde \varphi:= L_2(D',q)\varphi$ and
\[\tilde u := L_1(D',q) \begin{pmatrix}
    |D'|^{-\ell} & 0 \\ 0 & (|D'|+|q|)^{-\ell}
  \end{pmatrix}\partial_n^\ell T_1\varphi.\]
We have to show that $\|\tilde u\|_{L^p(\R^n_+)}\le C \|\tilde \varphi\|_{L^p(\R^n_+)}$. For this, we write
\begin{align*}
  L_1(\xi',q) & \begin{pmatrix}
    |\xi'|^{-\ell} & 0 \\ 0 & \rho^{-\ell}
  \end{pmatrix} \partial_n^{\ell+1}{Y}(\xi',x_n,q)\Psi(\xi',q) L_2(\xi',q)^{-1} \\
  & = x_n^{-1} M_1^{(\ell)}(\xi',x_n,q)\Big( \rho^m|\xi'|^m I_{2m}\Big) M_2(\xi',q) \Big( \rho^{-m}|\xi'|^{-m}I_{2m}\Big)\\
  &= x_n^{-1} M_1^{(\ell)}(\xi',x_n,q) M_2(\xi',q).
\end{align*}
Inserting this into the definition of the solution operator, we obtain
\[\tilde u = -\int_0^\infty \frac1{x_n+y_n} (\mathcal F')^{-1} M_1^{(\ell)}(\xi',x_n+y_n,q)M_2(\xi',q) \mathcal F'\tilde \varphi(\xi',y_n)\,dy_n.\]
Therefore,
\begin{align*}
  \|\tilde u\|_{L^p(\R^n_+)}^p & = \int_0^\infty \|\mathcal F'\tilde u(\cdot,x_n)\|_{L^p(\R^{n-1})}^p dx_n\\
  & \le \int_0^\infty \Bigg[ \int_0^\infty \frac1{x_n+y_n}\Big\| (\mathcal F')^{-1}M_1^ {(\ell)}(\xi',x_n+y_n,q)\\
  & \hspace*{8em}M_2(\xi',q) \mathcal F'\tilde \varphi(\xi',y_n)\Big\|_{L^p(\R^{n-1})} dy_n\Bigg]^p dx_n\\
  & \le C \int_0^\infty \Bigg[ \int_0^\infty \frac1{x_n+y_n}\|\tilde \varphi(\cdot,x_n)\|_{L^p(\R^{n-1})} dy_n\Bigg]^p dx_n\\
  & \le C \int_0^\infty \|\tilde \varphi(\cdot,y_n)\|_{L^p(\R^{n-1})}^p dy_n\\
  & = C\|\tilde \varphi\|^p_{L^p(\R^n_+)}.
\end{align*}
Here we used the fact that $M_1^{(\ell)}$ and $M_2$ are Michlin functions and therefore Fourier multipliers and that the (one-sided) Hilbert transform
\[\phi\mapsto H\phi,\; (H\phi)(x_n) := \int_0^\infty \frac{\phi(y_n)}{x_n+y_n}\, dy_n\]
induces a bounded operator in $L^p((0,\infty))$ for every $p\in (1,\infty)$.

This shows the first statement in a). Obviously, the uniform estimate also holds in the case when $L_1$ and $L_2$ are multiplied with the same factor, as this factor cancels out.

The proof of b) follows exactly in the same way with $M_2$ being replaced by $\tilde M_2$ from Lemma~\ref{3.5}.
\end{proof}

The next result shows the key estimate for the solution of \eqref{eq3-1}.

\begin{theorem}
  \label{4.2}
  Let $u\in (W_p^{2m}(\R^n_+))^2$ be a solution of $A(D,q)u = 0,\, B(D_n) u = g$ with $g\in \prod_{j=1}^{2m} W_p^{2m-j+1-1/p}(\R^{n-1})$. Let $\tilde g\in \prod_{j=1}^{2m} W_p^{2m-j+1}(\R^n_+)$ be an extension of $g$ to the half-space. Let $q_0>0$. Then for all $q\in\bar\Sigma$ with $|q|\ge q_0$, the inequalities
  \begin{align*}
  |u_1|_{2m,p,\R^n_+} + |u_2|_{m,p,\R^n_+} + |q|^m|u_2|_{0,p,\R^n_+}& \le C \sum_{j=1}^{2m} \|\tilde g_j\|_{2m-j+1,p,\R^n_+},\\
    |u_1|_{2m,p,\R^n_+}  + |q|^m |u_1|_{m,p,\R^n_+} + \norm u_2\norm_{2m,p,\R^n_+} & \le C\sum_{j=1}^{2m} \norm \tilde g_j\norm_{2m-j+1,p,\R^n_+}
  \end{align*}
  hold.
\end{theorem}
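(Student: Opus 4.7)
The plan is to combine the representation $u = T_1\tilde g + T_2(\partial_n\tilde g)$ from Lemma~\ref{3.5} with Lemma~\ref{4.1} in its two variants: the parameter-elliptic form (with $L_1,L_2,\tilde L_2$) and the elliptic-rescaled form (with $L_1^{(0)},L_2^{(0)}$, and the analogous $\tilde L_2^{(0)}:=|\xi'|^m\rho^{-m}\tilde L_2$, obtained by the same proof as in Lemma~\ref{4.1}). Choosing the normal-derivative order $\ell\in\{0,\dots,2m\}$ appropriately and summing then recovers the various seminorms on $u_1$ and $u_2$.

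For the \emph{parameter-elliptic} (second) inequality, I would apply Lemma~\ref{4.1} directly with $L_1,L_2,\tilde L_2$. The $u_2$-block weight $\rho^{2m}$ combined with the $(|D'|+|q|)^{-\ell}\partial_n^\ell$ correction gives $\|\rho^{2m-\ell}\partial_n^\ell u_2\|_{L^p(\R^n_+)}\le C(\|L_2\tilde g\|_{L^p}+\|\tilde L_2\partial_n\tilde g\|_{L^p})$; summing over $\ell$ and using the standard equivalence $\norm v\norm_{2m,p,\R^n_+}\sim \sum_{\ell=0}^{2m}\|(|D'|+|q|)^{2m-\ell}\partial_n^\ell v\|_{L^p}$ reproduces $\norm u_2\norm_{2m,p,\R^n_+}$. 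The $u_1$-block weight $\rho^m|\xi'|^m$, with the $|D'|^{-\ell}$-correction, becomes $\rho^m|\xi'|^{m-\ell}$; binomial expansion of $\rho^m$ and summation in $\ell$ directly expose both $|u_1|_{2m,p,\R^n_+}$ (from the pure $|\xi'|^m$-term) and $|q|^m |u_1|_{m,p,\R^n_+}$ (from the pure $|q|^m$-term). On the right, each $j$-th component of $L_2$, and of $\tilde L_2\partial_n$, is by Lemmas~\ref{3.8} and \ref{3.9} a Michlin-bounded multiplier of parameter-elliptic order $2m-j+1$, producing $\sum_j\norm\tilde g_j\norm_{2m-j+1,p,\R^n_+}$.

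For the \emph{mixed} (first) inequality, I would split the left-hand side and use different variants for the two blocks. The $|u_1|_{2m,p,\R^n_+}$-piece comes from the $(0)$-variant: the $u_1$-block weight $|\xi'|^{2m}$ of $L_1^{(0)}$ controls $|u_1|_{2m,p}$ directly, and each $j$-th component of $L_2^{(0)}$ (and similarly of $\tilde L_2^{(0)}\partial_n$) is, as already observed in the proof of Lemma~\ref{3.9}, Michlin-dominated by the purely tangential symbol $|\xi'|^{2m-j+1}$, so that $\|L_2^{(0)}\tilde g\|_{L^p}+\|\tilde L_2^{(0)}\partial_n\tilde g\|_{L^p}\le C\sum_j\|\tilde g_j\|_{2m-j+1,p,\R^n_+}$ \emph{without} any parameter factor. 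The $u_2$-piece $|u_2|_{m,p}+|q|^m|u_2|_{0,p}$, by contrast, is obtained from the parameter-elliptic variant and then rescaled: the pointwise inequality $\rho^{2m-\ell}\ge |q|^m|\xi'|^{m-\ell}$ (valid for $\ell\le m$) converts the parameter-elliptic bound into $|q|^m\||\xi'|^{m-\ell}\partial_n^\ell u_2\|_{L^p}\le C(\|L_2\tilde g\|_{L^p}+\|\tilde L_2\partial_n\tilde g\|_{L^p})$, and the instance $\ell=0$ similarly gives $|q|^{2m}\|u_2\|_{L^p}\le C(\ldots)$. Dividing through by $|q|^m$ reduces matters to the componentwise bound
\[ |q|^{-m}\bigl(\|L_2\tilde g\|_{L^p}+\|\tilde L_2\partial_n\tilde g\|_{L^p}\bigr) \le C\sum_j\|\tilde g_j\|_{2m-j+1,p,\R^n_+},\]
which one verifies by expanding, e.g., $(L_2)_j/|q|^m = (1+|\xi'|/|q|)^m|\xi'|^{m-j+1} = \sum_k\binom{m}{k}|\xi'|^{m-j+1+k}|q|^{-k}$ and noting that, under $|q|\ge q_0>0$, each term is dominated by the purely tangential multiplier $|\xi'|^{m-j+1+k}\le |\xi'|^{2m-j+1}$, whose $L^p(\R^n_+)$-norm against $\tilde g_j$ is bounded by $\|\tilde g_j\|_{2m-j+1,p,\R^n_+}$; an analogous calculation handles the entries with $j>m$ and the $\tilde L_2\partial_n\tilde g$-term.

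The main obstacle is recognizing the asymmetric structure of the mixed inequality — that $|u_1|_{2m,p}$ must be extracted from the elliptic $(0)$-variant of Lemma~\ref{4.1}, while $|u_2|_{m,p}+|q|^m|u_2|_{0,p}$ must be extracted from the parameter-elliptic variant \emph{after} dividing by $|q|^m$, a step legitimate only because of the hypothesis $|q|\ge q_0>0$. Once this split is set up, the remaining ingredients — the Michlin multiplier theorem, the Hilbert-transform argument already used in the proof of Lemma~\ref{4.1}, and the standard equivalence of parameter-dependent Sobolev norms with sums of weighted $L^p$-multiplier norms — combine mechanically to finish the proof.
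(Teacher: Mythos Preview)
Your proposal is correct and follows essentially the same approach as the paper: use Lemma~\ref{3.5} to write $u=T_1\tilde g+T_2(\partial_n\tilde g)$, apply the parameter-elliptic variant of Lemma~\ref{4.1} for the second inequality, and for the first inequality split the left-hand side so that $|u_1|_{2m,p}$ comes from the $L_1^{(0)},L_2^{(0)}$-variant while the $u_2$-terms come from the parameter-elliptic variant after division by $|q|^m$ (legitimate since $|q|\ge q_0$). The only organizational difference is that the paper first records an intermediate refined estimate (the right-hand side $\sum_{j\le m}(\|\tilde g_j\|_{2m-j+1}+|q|^m\|\tilde g_j\|_{m-j+1})+\sum_{j>m}\norm\tilde g_j\norm_{2m-j+1}$) and then specializes it in two ways, whereas you bound the right-hand side directly in each case via binomial expansion; both routes are equivalent.
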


\begin{proof}
  In this proof, we will write $\|\cdot\| := \|\cdot\|_{L^p(\R^n_+)}$. We use the equivalences
  \[
   |\varphi|_{k,p,\R^n_+}  \approx \sum_{\ell=0}^k \big\| |D'|^{k-\ell}\partial_n^\ell \varphi\big\|,\quad
\norm\varphi\norm_{k,p,\R^n_+}  \approx \sum_{\ell=0}^k \big\| (|D'|+|q|)^{k-\ell}\partial_n^\ell\varphi\big\|
\]
which can easily be seen by a Michlin type argument. With this, we get
\begin{align*}
  |u_1|_{2m,p,\R^n_+} &+ |q|^m |u_1|_{m,p,\R^n_+} + \norm u_2\norm_{2m,p,\R^n_+}\\
  & \approx \sum_{\ell=0}^{2m} \big\| |D'|^{2m-\ell} \partial_n^\ell u_1 \big\|  + \sum_{\ell=0}^m |q|^m \big\| |D'|^{m-\ell} \partial_n^\ell u_1\big\| \\
  &\quad +  \sum_{\ell=0}^{2m} \big\| (|D'|+|q|)^{2m-\ell}\partial_n^\ell u_2\big\|\\
  & \le C \sum_{\ell=0}^{2m} \Bigg\| L_1(D',q)\begin{pmatrix}|D'|^{-\ell} & 0 \\ 0 & (|D'|+|q|)^{-\ell}\end{pmatrix} \partial_n^\ell u\Bigg\|
\end{align*}
and
\begin{align*}
  \|L_2(D',q) \tilde g\| & \approx \sum_{j=1}^m \big\| (|D'|+|q|)^m |D'|^{m-j+1} \tilde g_j\big\|  \\
  & \qquad  + \sum_{j=m+1}^{2m} \big\| (|D'|+|q|)^{2m-j+1} \tilde g_j\big\|\\
  &\approx \sum_{j=1}^m |\tilde g_j|_{2m-j+1,p,\R^n_+} + |q|^m \sum_{j=1}^m |\tilde g_j|_{m-j+1,p,\R^n_+} \\
  & \qquad + \sum_{j=m+1}^{2m}\norm \tilde g_j\norm_{2m-j+1,p,\R^n_+}.
\end{align*}
Let $u\in (W_p^{2m}(\R^n_+))^2$ be a solution of $A(D,q)u=0,\, B(D_n)u=g$, and let $\tilde g$ be an extension of $g$. By a density argument, we may assume that $u\in (\mathscr S(\R^n_+))^2$. By Lemma~\ref{3.5}, we have $u=T_1\tilde g+T_2(\partial_n \tilde g)$. Applying Lemma~\ref{4.1}, we get
\begin{align}
  |u_1&|_{2m,p,\R^n_+}+ |q|^m |u_1|_{m,p,\R^n_+} + \norm u_2\norm_{2m,p,\R^n_+}\nonumber \\
  &\le C \Bigg[ \sum_{j=1}^m \left(|\tilde g_j|_{2m-j+1,p,\R^n_+} + |q|^m  |\tilde g_j|_{m-j+1,p,\R^n_+} \right) \\
  & \quad + \sum_{j=m+1}^{2m}\norm \tilde g_j\norm_{2m-j+1,p,\R^n_+}\nonumber\\
  &\quad + \sum_{j=1}^m |\partial_n \tilde g_j|_{2m-j,p,\R^n_+} + |q|^m |\partial_n \tilde g_j|_{m-j,p,\R^n_+} + \sum_{j=m+1}^{2m} \norm \partial_n \tilde g_j\norm_{2m-j,p,\R^n_+}\Bigg] \nonumber \\
  & \le C
  \Big[ \sum_{j=1}^m \left(\|\tilde g_j\|_{2m-j+1,p,\R^n_+} + |q|^m \|\tilde g_j\|_{m-j+1,p,\R^n_+} \right) \\
  & \quad+ \sum_{j=m+1}^{2m}\norm \tilde g_j\norm_{2m-j+1,p,\R^n_+}\Big].
  \label{eq4-1}
\end{align}
Inserting the inequality
\[ \|\tilde g_j\|_{2m-j+1,p,\R^n_+} +|q|^m \|\tilde g_j\|_{m-j+1,p,\R^n_+} \le C \norm \tilde g_j\norm_{2m-j+1,p,\R^n_+}\]
into the right-hand side, we see that
\begin{equation}
   |u_1|_{2m,p,\R^n_+}+ |q|^m |u_1|_{m,p,\R^n_+} + \norm u_2\norm_{2m,p,\R^n_+}
   \le C\sum_{j=1}^{2m} \norm \tilde g_j\norm_{2m-j+1,p,\R^n_+}.
   \label{eq4-2}
\end{equation}
On the other hand, inserting the inequality
\[ \|\tilde g_j\|_{2m-j+1,p,\R^n_+} +|q|^m \|\tilde g_j\|_{m-j+1,p,\R^n_+} \le C |q|^m \| \tilde g_j\|_{2m-j+1,p,\R^n_+}\]
(which holds for all $|q|\ge q_0$ with a constant $C$ depending on $q_0$), we get in particular
\[ \norm u_2\norm_{2m,p,\R^n_+} \le C |q|^m \sum_{j=1}^{2m} \| \tilde g_j\|_{2m-j+1,p,\R^n_+}.\]
Dividing by $|q|^m$, we see that this implies
\begin{equation}
  \label{eq4-3}
  |u_2|_{m,p,\R^n_+} + |q|^m \|u_2\| \le C \sum_{j=1}^{2m} \|\tilde g_j\|_{2m-j+1,p,\R^n_+}.
\end{equation}
In the same way as above, we can apply Lemma~\ref{4.1} with $L_1^{(0)}$ and $L_2^{(0)}$ instead of $L_1$ and $L_2$, respectively. We see that
\begin{align*}
|u_1&|_{2m,p,\R^n_+} \approx \sum_{\ell=0}^{2m} \big\| |D'|^{2m-\ell} \partial_n^\ell u_1\big\|  \\
& \le C \sum_{\ell=0}^{2m} \Big\| L_1^{(0)} (D',q) \begin{pmatrix}
  |D'|^{-\ell} &  0\\ 0 & (|D'|+|q|)^{-\ell}
\end{pmatrix} \partial_n^\ell u\Big\| \\
& \le C \Big( \| L_2^{(0)} (D',q) \tilde g\| + \| \tilde L_2^{(0)} (D',q) \partial_n \tilde g\|\Big)\\
& \le C \Big( \sum_{j=1}^m \big\| |D'|^{2m-j+1} \tilde g_j\big\| + \sum_{j=m+1}^{2m} \big\| |D'|^m (|D'|+|q|)^{m-j+1} \tilde g_j\big\| \\
&\quad + \sum_{j=1}^m \big\| |D'|^{2m-j} \partial_n \tilde g_j\big\| + \sum_{j=m+1}^{2m} \big\| |D'|^m (|D'|+|q|)^{m-j} \partial_n \tilde g_j\big\|\Big) .
\end{align*}
With the inequality
\[  \big\| |D'|^m (|D'|+|q|)^{m-j+1} \tilde g_j\big\|  \le C \big\| |D'|^{2m-j+1} \tilde g_j\big\|\quad (j=m+1,\dots,2m)\]
this gives
\[ |u_1|_{2m,p,\R^n_+} \le C \sum_{j=1}^{2m} \| \tilde g_j\|_{2m-j+1,p,\R^n_+}.\]
This and equations \eqref{eq4-2} and \eqref{eq4-3} yield the statements of the theorem.
\end{proof}

Now we can consider the problem $A(D,q)u=f, B(D_n)u =g$ in the half-space. As mentioned in Remark~\ref{2.3}, this finishes the proof of the main theorem.

\begin{theorem}
  \label{4.3}
  Let $u\in (W_p^{2m}(\R^n_+))^2$ be a solution of $A(D,q)u = f,\, B(D_n) u = g$ with $f\in (L^p(\R^n_+))^2$ and $g\in \prod_{j=1}^{2m} W_p^{2m-j+1-1/p}(\R^{n-1})$.  Let $q_0>0$. Then for all $q\in\bar\Sigma$  with $|q|\ge q_0$ the following a priori estimates hold:
  \begin{align}
  \|u_1\|_{2m,p,\R^n_+} &+ \|u_2\|_{m,p,\R^n_+} + |q|^m\|u_2\|_{0,p,\R^n_+} \le C  \Big( \|f_1\|_{0,p,\R^n_+}\nonumber\\
  & + \|f_2\|_{0,p,\R^n_+} + \sum_{j=1}^{2m} \|g_j\|_{2m-j+1-1/p,p,\R^{n-1}}+ \|u_1\|_{0,p,\R^n_+}\Big),\label{eq4-4}\\
    \|u_1\|_{2m,p,\R^n_+}  &+ |q|^m \|u_1\|_{m,p,\R^n_+} + \norm u_2\norm_{2m,p,\R^n_+} \le C\Big( |q|^m \|f_1\|_{0,p,\R^n_+} \nonumber\\
  &  + \|f_2\|_{0,p,\R^n_+} + \sum_{j=1}^{2m} \norm g_j\norm_{2m-j+1-1/p,p,\R^{n-1}} +   |q|^m \|u_1\|_{0,p,\R^n_+}\Big).\label{eq4-5}
  \end{align}
\end{theorem}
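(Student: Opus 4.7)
I would reduce Theorem~\ref{4.3} to the homogeneous-interior case already solved in Theorem~\ref{4.2} by the ansatz $u = v + w$, where $v$ absorbs the inhomogeneity $f$ and $w$ then satisfies $A(D,q)w = 0$ with new boundary data. Since the interior system is block-diagonal, I would construct $v_1$ and $v_2$ independently by solving two auxiliary half-space boundary value problems with absolutely elliptic boundary conditions (so each is covered by the standard results cited in Remark~\ref{2.3}): the generalized Dirichlet problem $A_1(D)v_1 = f_1$ with $\partial_n^{j-1}v_1|_{x_n=0}=0$ $(j=1,\dots,m)$, and the generalized Neumann problem $A_2(D,q)v_2 = f_2$ with $\partial_n^{j-1}v_2|_{x_n=0}=0$ $(j=m+1,\dots,2m)$. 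The former yields the purely elliptic estimate
\[ \|v_1\|_{2m,p,\R^n_+} \le C\bigl(\|f_1\|_{0,p,\R^n_+} + \|v_1\|_{0,p,\R^n_+}\bigr), \]
while the latter, being parameter-elliptic, gives
\[ \norm v_2\norm_{2m,p,\R^n_+} \le C\|f_2\|_{0,p,\R^n_+} \]
uniformly for $|q|\ge q_0$.

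Setting $w := u - v$, we have $A(D,q)w=0$ in $\R^n_+$ and $B(D_n)w|_{x_n=0} = g - B(D_n)v|_{x_n=0}$. As extension of this boundary data to the half-space I would take $\hat g_j := (Eg_j) - B_j(D_n)v$, where $Eg_j$ is any bounded extension of $g_j$ satisfying the standard trace estimate in parameter-dependent form,
\[ \|Eg_j\|_{2m-j+1,p,\R^n_+} + |q|^m\|Eg_j\|_{m-j+1,p,\R^n_+} \le C\norm g_j\norm_{2m-j+1-1/p,p,\R^{n-1}}. \]
Since $B_j$ is a normal derivative of order $j-1$, the contributions $B_j(D_n)v$ are controlled by $\|v_1\|_{2m,p,\R^n_+} + \|v_2\|_{2m,p,\R^n_+}$, with analogous control in the parameter-dependent norms $\norm\cdot\norm_{s,p}$.

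Applying Theorem~\ref{4.2} to $w$ with this extension $\hat g$, I would substitute the above bounds into the right-hand side. For \eqref{eq4-4} this is a direct matter of combining the elliptic estimate for $v_1$, the parameter-elliptic estimate for $v_2$, the trace estimate for $g$, and the passage from seminorm $|w_1|_{2m,p}$ to full norm $\|w_1\|_{2m,p}$ via $\|w_1\|_{0,p}\le\|u_1\|_{0,p}+\|v_1\|_{0,p}$, which is precisely where the $\|u_1\|_{0,p}$ on the right of \eqref{eq4-4} arises. For \eqref{eq4-5} the same procedure is carried out with the parameter-dependent form of Theorem~\ref{4.2}; the only place where $f_1$ enters the right-hand side is through terms like $|q|^m\|v_1\|_{m,p}$, and the elliptic estimate bounds these by $C|q|^m(\|f_1\|_{0,p}+\|v_1\|_{0,p})$, explaining the $|q|^m$ weight on $\|f_1\|$ and $\|u_1\|$ in \eqref{eq4-5}.

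\textbf{Main obstacle.} The delicate point is the parameter bookkeeping for \eqref{eq4-5}: since $A_1$ carries no intrinsic $q$-scaling, any passage of $v_1$ through the parameter-elliptic right-hand side forces a factor $|q|^m$, which is precisely what appears in front of $\|f_1\|_{0,p}$ and $\|u_1\|_{0,p}$. Beyond this bookkeeping, the argument is a routine combination of Theorem~\ref{4.2}, standard extension lemmas, and classical half-space BVP estimates, with no new analytical input required.
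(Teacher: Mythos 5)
Your overall plan---decompose $u = v + w$, absorb the inhomogeneity $f$ into $v$, apply Theorem~\ref{4.2} to $w$ with corrected boundary data---is exactly the paper's strategy, and your handling of $v_2$ (parameter-elliptic, uniquely solvable for $|q|\ge q_0$, with $\norm v_2\norm_{2m,p,\R^n_+}\le C\|f_2\|_{0,p,\R^n_+}$), of the boundary extension $Eg_j$ with the parameter-dependent trace estimate, and of the final bookkeeping all track the paper's proof closely.

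The gap lies in the construction of $v_1$. You propose to obtain it by solving the half-space generalized Dirichlet problem $A_1(D)v_1 = f_1$ in $\R^n_+$, $\partial_n^{j-1}v_1|_{x_n=0}=0$ $(j=1,\dots,m)$. But $A_1(D)$ carries no spectral parameter, and the half-space Dirichlet problem for a pure elliptic operator is \emph{not} uniquely solvable in $W_p^{2m}(\R^n_+)$: the symbol $A_1(\xi)$ vanishes at $\xi=0$, so (think of $-\Delta$ on $\R^n_+$) $0$ lies in the continuous spectrum of the Dirichlet realization, there is no bounded solution operator, and there is no guarantee that any $v_1\in W_p^{2m}(\R^n_+)$ with $A_1(D)v_1 = f_1$ and vanishing Dirichlet data exists. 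Moreover, even granting existence, the a priori estimate you invoke, $\|v_1\|_{2m,p}\le C(\|f_1\|_{0,p} + \|v_1\|_{0,p})$, leaves $\|v_1\|_{0,p}$ undetermined on the right. You later need to pass $\|v_1\|_{0,p}$ into the final right-hand side via $\|w_1\|_{0,p}\le\|u_1\|_{0,p}+\|v_1\|_{0,p}$, but your estimate never yields $\|v_1\|_{0,p}\lesssim\|u_1\|_{0,p}+\|f_1\|_{0,p}$, so the substitution does not close. (Note also that none of the three references mentioned in Remark~\ref{2.3} cover the half-space, non-parameter elliptic boundary value problem---they address whole-space elliptic, whole-space parameter-elliptic, and half-space parameter-elliptic cases.)

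The paper circumvents both difficulties by imposing no boundary conditions on $v_1$ at all. It introduces a total extension operator $e_+$, a cut-off $\psi$ vanishing near $\xi=0$, the multiplier $R_1(D)=\mathcal F^{-1}\psi A_1^{-1}\mathcal F$ (which is Michlin-continuous $L^p(\R^n)\to W_p^{2m}(\R^n)$ because $\psi$ removes the singularity of $A_1(\xi)^{-1}$ at the origin), and the smoothing operator $(1-\psi)(D)\in L(L^p(\R^n),W_p^{2m}(\R^n))$; it then sets $\tilde f_1 := A_1(D)e_+u_1$ and $v_1 := r_+\bigl[(1-\psi)(D)e_+u_1 + R_1(D)\tilde f_1\bigr]$. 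The algebraic identity $A_1(D)R_1(D) = \psi(D)$ gives $A_1(D)v_1 = r_+\tilde f_1 = f_1$, and the mapping properties of the two explicit multiplier operators yield the bound $\|v_1\|_{2m,p,\R^n_+}\le C(\|u_1\|_{0,p,\R^n_+}+\|f_1\|_{0,p,\R^n_+})$ stated in \eqref{eq4-6}. Your argument would need to replace the Dirichlet-problem ansatz for $v_1$ by some such explicit device handling the missing low-frequency invertibility of $A_1$; without it, the reduction to Theorem~\ref{4.2} is not justified.
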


\begin{proof}
  \textbf{(i)}  We start the proof with some preliminary remarks. Let $r_+\colon \varphi\mapsto \varphi|_{\R^n_+}$ be the restriction operator from $\R^n$ to $\R^n_+$. Then $r_+$  is a retraction from $W_p^k(\R^n)$ to $W_p^k(\R^n_+)$ for every $k\in\N_0$, and there exists a co-retraction (independent of $k$), i.e. a total extension operator  $e_+\in L(W_p^k(\R^n_+),W_p^k(\R^n))$ satisfying $r_+e_+=\id_{W_p^k(\R^n_+)}$ for all $k$ (see \cite{ADAM}, Theorem~5.21).

  For every $j\in\{0,\dots, k-1\}$, the trace operator to the boundary $\gamma_j u := \partial_n^j u|_{\R^{n-1}}$ is a bounded operator from $W_p^k(\R^n_+)$ to $W_p^{k-1/p}(\R^{n-1})$. This holds both with respect to the parameter-independent norms $\|\cdot\|$ and the parameter-dependent norms $\norm\cdot\norm$. For the latter, we refer to \cite{ADF97}, Proposition~2.2. There exists a parameter-dependent extension operator $E_q\in L(W_p^{k-1/p}(\R^{n-1}),  W_p^k(\R^n_+))$ which satisfies $\gamma_0E_q=\id_{W_p^{k-1/p}(\R^{n-1})}$ and whose operator norm with respect to the parameter-dependent norms $\norm\cdot\norm$ is bounded by a constant independent of $q$ for all $q\in\bar\Sigma$ with $|q|\ge q_0$ (see, e.g., \cite{ADF97}, Proposition~2.3). In particular, we will consider $E_1$ which is a parameter-independent continuous extension operator.

  Let $\psi\in C^\infty(\R^n)$ with $0\le \psi\le 1$, $\psi(\xi)=0$ for $|\xi|\le 1$ and $\psi(\xi)=1$ for $|\xi|\ge 2$. Then a simple application of Michlin's theorem shows that $R_1(D) := \mathcal F^{-1} \psi(\xi)A_1^{-1}(\xi)\mathcal F$ induces a bounded linear operator $R_1(D)\in L(W_p^k(\R^n), $ $W_p^{k+2m}(\R^n))$ for all $k\in\N_0$. Due to the compact support of $1-\psi$, the related operator $(1-\psi)(D)$ belongs to $L(L^p(\R^n), W_p^k(\R^n))$ for all $k\in\N_0$. Note that $A_1(D)$ and $\psi(D)$ commute due to $A_1(D)\psi(D) = \mathcal F^{-1} A_1(\xi)\psi(\xi)\mathcal F$.

\textbf{(ii)} Let $u\in (W_p^{2m}(\R^n_+))^2$ be a solution of $A(D)u=f,\, B(D)u=g$, and let $q\in\bar\Sigma$ with $|q|\ge q_0$.  We define
$ \tilde f_1 := A_1(D) e_+u_1$. Then $\tilde f_1\in L^p(\R^n)$ and $r_+\tilde f_1 = A_1(D)r_+e_+u_1 = A_1(D)u_1 = f_1$. For
\[ v_1 := r_+\Big[ (1-\psi)(D) e_+u_1 + R_1(D)\tilde f_1\Big],\]
we obtain $v_1\in W_p^{2m}(\R^n_+)$ and
\begin{align*}
  A_1(D)v_1 & = r_+ A_1(D)(1-\psi)(D)e_+u_1 + r_+ A_1(D) R_1(D)\tilde f_1\\
  & = r_+ (1-\psi)(D) A_1(D) e_+u_1 + r_+ (A_1 \psi A_1^{-1} )(D)\tilde f_1\\
  & = r_+ (1-\psi)(D)\tilde f_1 + r_+ \psi(D)\tilde f_1 = r_+\tilde f_1 = f_1.
\end{align*}
By the continuity of the involved operators, we have
\begin{equation}
  \label{eq4-6}
  \|v_1\|_{2m,p,\R^n_+} \le C\Big( \|u_1\|_{0,p,\R^n_+} + \|f_1\|_{0,p,\R^n_+}\Big).
\end{equation}

\textbf{(iii)} Similarly, we set $v_2 := r_+ A_2(D,q)^{-1} e_+ f_2$. It is well-known (or easily seen by Michlin's theorem) that $v_2\in W_p^{2m}(\R^n_+)$ with $A_2(D,q)v_2=f_2$ and
\begin{equation}
  \label{eq4-7}
  \norm v_2\norm_{2m,p,\R^n_+} \le C \|f_2\|_{0,p,\R^n_+}.
\end{equation}

\textbf{(iv)} We define $v:= (v_1,v_2)^\top\in (W_p^{2m}(\R^n_+))^2$ and $w:=u-v$. Then $w$ is a solution of $A(D)w=0,\, B(D)w=g- B(D)v$. Applying the parameter-independent extension operator $E_1$ to every component of $g$, we define $\tilde g:= E_1g\in \prod_{j=1}^{2m} W_p^{2m-j+1}(\R^n_+)$. An extension $\tilde h$ of $B(D)v$ is given by omitting the trace to the boundary. Note that $\tilde h_j = \partial_n^{j-1} v_1\pm \partial_n^{j-1} v_2$.

For the left-hand side of \eqref{eq4-4}, we remark that for $w=(w_1,w_2)^\top$ we have
\[ \|w_1\|_{2m,p,\R^n_+} \le C \big( |w_1|_{2m,p,\R^n_+} + \|w_1\|_{0,p,\R^n_+}\big).\]
By Theorem~\ref{4.3}, we obtain
\begin{align}
\|w_1\|_{2m,p,\R^n_+} & + \|w_2\|_{m,p,\R^n_+} + |q|^m \|w_2\|_{0,p,\R^n_+} \le C \Big( \|f\|_{0,p,\R^n_+}\nonumber \\
  & \quad+\sum_{j=1}^{2m} \| \tilde g_j + \partial_n^{j-1} (v_1 \pm v_2)\|_{2m-j+1,p,\R^n_+} + \|w_1\|_{0,p,\R^n_+}\Big). \label{eq4-8}
\end{align}
From \eqref{eq4-6} we see that
\[ \|\partial_n^{j-1} v_1\|_{2m-j+1,p,\R^n_+} \le C \|v_1\|_{2m,p,\R^n_+} \le C \Big( \|u_1\|_{0,p,\R^n_+} + \|f_1\|_{0,p,\R^n_+}\Big).\]
For $v_2$ we obtain $\|\partial_n^{j-1}v_2\|_{2m-j+1,p,\R^n_+}\le C\|f_2\|_{0,p,\R^n_+}$ in the same way from \eqref{eq4-7}. Inserting this into \eqref{eq4-8}, we obtain the first inequality \eqref{eq4-4} of the theorem.

\textbf{(v)} The proof of \eqref{eq4-5} follows the same lines. However, here we start with the refined estimate \eqref{eq4-1}. For the left-hand side of \eqref{eq4-5}, we note that
\[ \|u_1\|_{2m,p,\R^n_+} + |q|^m \|u_1\|_{m,p,\R^n_+} \le C\Big( |u_1|_{2m,p,\R^n_+} + |q|^m |u_1|_{m,p,\R^n_p} + |q|^m \|u_1\|_{0,p,\R^n_+}\Big).\]
Now we define $\tilde g:= E_q g$ with the parameter-dependent extension operator $E_q$ from part (i). Then the term on the right-hand side of \eqref{eq4-1} equals
\begin{align}
  & \sum_{j=1}^m \|\tilde g_j+\partial_n^{j-1}(v_1 \pm v_2)\|_{2m-j+1,p,\R^n_+} \nonumber\\
  & \qquad\qquad + |q|^m \sum_{j=1}^m \|\tilde g_j+ \partial_n^{j-1} (v_1 \pm v_2)\|_{m-j+1,p,\R^n_+}  \nonumber\\
   & \qquad\qquad + \sum_{j=m+1}^{2m}\norm \tilde g_j+ \partial_n^{j-1} (v_1 \pm v_2)\norm_{2m-j+1,p,\R^n_+}.\label{eq4-9}
\end{align}
For $j=1,\dots,m$, we can estimate
\begin{align*}
\|\tilde g_j&\|_{2m-j+1,p,\R^n_+} + |q|^m  \|\tilde g_j\|_{m-j+1,p,\R^n_+} \le C \norm \tilde g_j\norm_{2m-j+1,p,\R^n_+}\\
& \le C \norm g_j\norm_{2m-j+1-1/p,p,\R^{n-1}} .
\end{align*}
Concerning the terms involving $v_1$, we use
\begin{align*}
  \|\partial_n^{j-1}v_1&\|_{2m-j+1,p,\R^n_+} + |q|^m \|\partial_n^{j-1} v_1\|_{m-j+1,p,\R^n_+} \\
  & \le \|v_1\|_{2m,p,\R^n_+} + |q|^m \|v_1\|_{m,p,\R^n_+} \le C |q|^m \|v_1\|_{2m,p,\R^n_+}\\
  & \le C |q|^m \Big( \|u_1\|_{0,p,\R^n_+}+\|f_1\|_{0,p,\R^n_+}\Big)
\end{align*}
for $j=1,\dots,m$ and
\[ \norm \partial_n^{j-1}v_1\norm_{2m-j+1,p,\R^n_+} \le C |q|^m \|v_1\|_{2m,p,\R^n_+}\le C |q|^m \Big( \|u_1\|_{0,p,\R^n_+}+\|f_1\|_{0,p,\R^n_+}\Big)\]
for $j=m+1,\dots,2m$. Finally, the terms involving $v_2$ can be estimated by
\[ \norm \partial_n^{j-1}v_2\norm_{2m-j+1,p,\R^n_+} \le C \norm v_2\norm_{2m,p,\R^n_+}\le C \|f_2\|_{0,p,\R^n_+}.\]
So we see that all terms in \eqref{eq4-9} can be estimated by the right-hand side of \eqref{eq4-5}, and the proof of \eqref{eq4-5} is finished.
\end{proof}

%%%%%%%%%%%%%%%%%%%%%%%%%%%%%%%%%%%%%%%%%%%%%%%%%%%%%%%%%%%%%%
\begin{remark}
a) The estimate \eqref{aprioriabsch_domain} does not imply uniqueness of a solution to \eqref{TP} because the elliptic part $u_1$ of the solution appears in a norm of lower order on the right-hand side of the estimate. Nevertheless, in bounded domains such estimates give rise to the Fredholm property of a corresponding solution operator.

b) For $g=0$ and $f_1=0$, we obtain in particular
\[ |\lambda| \, \|u_2\|_{0,p,\R^n_+} \le C \Big( \|f_2\|_{0,p,\R^n_+} + |\lambda|^{1/2}\|u_1\|_{0,p,\R^n_+}\Big)\]
from \eqref{eq4-5}. This is the basis for resolvent estimates and spectral properties of the corresponding $L^p$-realization in the case where the Dirichlet problem for $A_1(x,D)$ in ${\Omega}_1$ is invertible. Here we have a connection to eigenvalue problems with weights and the Calder\'{o}n method as studied  in, e.g., \cite{Fai09}.
\end{remark}
%%%%%%%%%%%%%%%%%%%%%%%%%%%%%%%%%%%%%%%%%%%%%%%%%%%%%%%%%%%%%%%%%%%%%%%%%%%%%%%%%%%%%%%%%%%%%%%%%%%%%%%%%%%%%%%%%%%%%%%%%%%%%%%%%%%%

\end{document}